\newcommand{\be}{\beta}
\newcommand{\ga}{\gamma}
\newcommand{\la}{\lambda}
\newcommand{\om}{\omega}
\newcommand{\eps}{\varepsilon}
\newcommand{\vv}{\varphi}
\newcommand{\iy}{\infty}
\theoremstyle{plain}
\newtheorem{thm}{Theorem}
\newtheorem{lem}{Lemma}
\theoremstyle{definition}
\newtheorem{example}{Example}
\newtheorem{alg}{Algorithm}
\newtheorem{ip}{Inverse Problem}
\theoremstyle{remark}
\begin{document}

\begin{center}
{\large\bf Local solvability and stability of inverse problems for Sturm-Liouville operators with a discontinuity}
\\[0.2cm]
{\bf Chuan-Fu Yang, Natalia P. Bondarenko} \\[0.2cm]
\end{center}

\vspace{0.5cm}

{\bf Abstract.} Partial inverse problems are studied for Sturm-Liouville operators with a discontinuity.
The main results of the paper are local solvability and stability of the considered inverse problems. Our approach is based on
a constructive algorithm for solving the inverse problems. The key role in our method is played by the Riesz-basis property of a special vector-functional system
in a Hilbert space. In addition, we obtain a new uniqueness theorem for recovering the potential on a part of the interval,
by using a fractional part of the spectrum.

\medskip

{\bf Keywords:} inverse spectral problem; Sturm-Liouville operator; discontinuity condition;
local solvability; stability; constructive solution; Riesz basis.

\medskip

{\bf AMS Mathematics Subject Classification (2010):} 34A55; 34B05; 34B08; 34L40; 47E05.

\vspace{1cm}

{\large \bf 1. Introduction}

\bigskip

This paper concerns inverse spectral theory for the Sturm-Liouville operator with a discontinuity.
We focus on the following boundary value problem:
\begin{equation}\label{eqv}
  -y''+q(x)y=\lambda^2 y, \ 0<x<1,
\end{equation}
with the boundary conditions
\begin{equation}\label{bc}
  y'(0)-h_1y(0)=0=y'(1)+h_2y(1)
\end{equation}
and with the jump conditions
\begin{equation}\label{jc}
  y(d+0)=a_1y(d-0), \ y'(d+0)=a^{-1}_{1}y'(d-0)+a_2y(d-0),
\end{equation}
where $\la^2$ is the spectral parameter; $q\in L_2(0,1)$ is a real-valued function, called the {\it potential};
$0<d\leq 1/2$ is the discontinuity position; $a_j$ and $h_j$ are real numbers for $j = 1, 2$, $a_1 > 0$.

Inverse problems of spectral analysis consist in recovering operators from their spectral characteristics.
In recent years, inverse spectral problems attract much attention of mathematicians because of applications in quantum mechanics,
chemistry, acoustics, nanotechnology and other fields of science and engineering (see, e.g., \cite{FY01} and references therein).
In particular, the eigenvalue problem~\eqref{eqv}-\eqref{jc} with a discontinuity arises in geophysical models for oscillations of the Earth
(see \cite{And97, LU81}) and in electronics for constructing parameters of heterogeneous electronic lines with desirable technical characteristics	
(see \cite{LS64, MF80}).

The classical results of inverse problem theory were obtained for Sturm-Liouville operators without discontinuities
(see the monographs \cite{FY01, Mar77, Lev84, PT87}). Operators with discontinuities are more difficult for investigation,
so the complete inverse problem theory for such operators has not been constructed yet. The majority of the papers on inverse problems
for discontinuous operators concern uniqueness theorems (see \cite{Hald84, SY08, OK12, Yang14-1, Wang15}). The most extensive
study of recovering the problem~\eqref{eqv}-\eqref{jc} from the spectral data has been provided in \cite[Section~4.4]{FY01}.
The authors of~\cite{FY01} have developed a constructive procedure for solving the inverse problem, by using the method
of spectral mappings. They have also obtained necessary and sufficient conditions for solvability of the inverse problem.
We also mention the papers \cite{AKM96, Shep94}, concerning inverse scattering for discontinuous operators on the line
and on the half-line, respectively.

The present paper deals with {\it partial} inverse problems. In literature, such problems are also called half-inverse problems and inverse problems
by mixed data. Partial inverse problems consist in recovering operators
in the case, when partial information on their coefficients is given a priori. Indeed, for solving partial inverse problems,
usually less spectral data are required, than for complete problems.
The first half-inverse problem was studied
by Hochstadt and Lieberman for the Sturm-Liouville operator without discontinuities on a finite interval (see \cite{HL78}).
It has been proved in \cite{HL78}, that, if the potential is known a priori on a half-interval, the potential on the other
half-interval is uniquely specified by one spectrum. Further generalizations and developments of this result were obtained in
\cite{GS00, Sakh01, Hor01, HM04, Piv12, SBI11, But11}. In the recent paper \cite{BY19}, we have investigated half-inverse problems
for the Sturm-Liouville operator with a discontinuity \eqref{eqv}-\eqref{jc}. Reconstruction algorithms for solving these problems
have been developed, and conditions for existence of solutions have been obtained. However, the questions of local solvability and
stability for discontinuous partial inverse problems are still open, so we devote our present paper to these issues.
Note that stability plays an important role in justification of numerical methods for solving various problems
of mathematical physics.

Let us provide rigorous formulations of the studied problems and the main results.
For convenience, we rewrite the problem \eqref{eqv}-\eqref{jc} in the following form:
\begin{align}
   &-y''_j+q_j(x)y_j=\lambda^2 y_j, \quad x\in (0, d_j), \, j=1,2,  \label{2.1}\\
   &y'_j(0)-h_jy_j(0)=0, \label{2.2} \\
   &y_2(d_2)=a_1y_1(d_1),  \label{2.3}\\
   &y'_2(d_2)+a^{-1}_1y'_1(d_1)+a_2y_1(d_1)=0.\label{2.4}
\end{align}
Here $d_1 = d$, $d_2 = 1-d$, $q_j\in L_2(0,d_j)$ for $j=1,2$, and $q_1(x):=q(x)|_{[0,d_1]}$, $q_2(x):=q(1-x)|_{[0,d_2]}$.
Denote the boundary value problem \eqref{2.1}-\eqref{2.4} by $L = L(d, q_1, q_2, h_1, h_2, a_1, a_2)$.

The spectrum of the problem $L$ is a countable set of real and simple eigenvalues $\{ \la_n^2 \}_{n \ge 0}$,
$\la_n^2 < \la_{n+1}^2$ for $n \ge 0$. Without loss of generality, we assume that $\la_n > 0$ for all $n \ge 0$.
One can achieve this condition by a shift of the spectrum. First we will study the following inverse problem.

\begin{ip} \label{ip:1}
Suppose that $d = 1/2$. Given the eigenvalues $\{ \la_n^2 \}_{n \ge 0}$, the potential $q(x)$ for $x \in (1/2, 1)$ and the coefficients $h_2$, $a_1$,
construct $q(x)$ for $x \in (0, 1/2)$, $h_1$ and $a_2$.
\end{ip}

The uniqueness of solution for Inverse Problem~\ref{ip:1} was proved by Hald (see \cite{Hald84}).
In \cite{BY19} the reconstruction algorithm has been provided and conditions have been obtained, necessary and sufficient
for existence of solution. In this paper, we investigate the issues of local solvability and stability for Inverse Problem~\ref{ip:1}.

Suppose that we have a fixed problem $L = L(1/2, q_1, q_2, h_1, h_2, a_1, a_2)$ and its spectrum $\{ \la_n^2 \}_{n \ge 0}$.
The sequence $\Lambda := \{ \la_n \}_{n \ge 0}$ satisfies the following asymptotic relation (see \cite{Yang14}):
\begin{equation} \label{asymptla}
  \lambda_n=n\pi+\frac{(-1)^n a+b}{n\pi}+\frac{\beta_n}{n},
\end{equation}
where $a$ and $b$ are constants, depending on the problem $L$ (see Section~2 for details).
Here and below the same notation $\{ \be_n \}$ is used for various sequences from $l_2$.
We introduce for the fixed numbers $a$ and $b$ the following space of real sequences:
$$
    \mathbb M = \left\{ \mathcal M = \{ \mu_n \}_{n \ge 0} \colon \mu_n = \pi n + \frac{(-1)^n a + b}{\pi n} + \frac{\be_n}{n}, \: \{ \be_n \} \in l_2 \right\}.
$$
Define the distance on $\mathbb M$ as follows:
$$
    \rho(\mathcal M, \tilde {\mathcal M}) =\left(  \sum_{n = 0}^{\iy} (n + 1)^2(\mu_n - \tilde \mu_n)^2 \right)^{1/2},
    \quad \mathcal M = \{ \mu_n \}_{n \ge 0}, \: \tilde {\mathcal M} = \{ \tilde \mu_n \}_{n \ge 0}, \quad
     \mathcal M, \tilde {\mathcal M} \in \mathbb M.
$$

The following theorem establishes local solvability and stability of Inverse Problem~\ref{ip:1}.

\begin{thm} \label{thm:loc}
For every problem $L = L(1/2, q_1, q_2, h_1, h_2, a_1, a_2)$ there exists $\eps > 0$, such that for any sequence
$\tilde \Lambda = \{ \tilde \la_n \}_{n \ge 0} \in \mathbb M$, satisfying the estimate $\rho(\Lambda, \tilde \Lambda) \le \eps$,
there exist a function $\tilde q_1 \in L_2(0, 1/2)$ and a number $\tilde h_1 \in \mathbb R$, such that the problem
$\tilde L = L(1/2, \tilde q_1, q_2, \tilde h_1, h_2, a_1, a_2)$ has the spectrum $\{\tilde \la_n^2 \}_{n \ge 0}$
and
\begin{equation} \label{estq}
   \| q_1 - \tilde q_1 \|_{L_2} \le C \rho(\Lambda, \tilde \Lambda), \quad |h_1 - \tilde h_1| \le C \rho(\Lambda, \tilde \Lambda),
\end{equation}
where the constant $C > 0$ depends only on the problem $L$ and does not depend on $\tilde \Lambda$.
\end{thm}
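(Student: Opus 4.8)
The plan is to reduce Inverse Problem~\ref{ip:1} to a single linear equation in a Hilbert space and to extract both existence and the stability estimate \eqref{estq} from the boundedness of the inverse of the associated operator, the latter being controlled through the Riesz-basis property announced in the abstract. First I would recall, from the constructive algorithm of \cite{BY19}, the solution $\phi(x,\lambda)$ of \eqref{2.1} on $(0,1/2)$ satisfying the boundary condition \eqref{2.2}, together with the norming (weight) constants $\alpha_n$ of the reference problem $L$, and the analogous objects $\tilde\phi(x,\lambda)$, $\tilde\alpha_n$ for the sought problem $\tilde L$. Because $q_2$, $h_2$, $a_1$ are held fixed, the characteristic function of $L$ factors through the known data on $(1/2,1)$; consequently the eigenvalues $\lambda_n^2$, via the jump conditions \eqref{2.3}--\eqref{2.4}, determine the interior Cauchy data $(\phi(1/2,\lambda_n),\phi'(1/2,\lambda_n))$, and likewise for $\tilde L$ with $\tilde\lambda_n$. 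This step converts the spectral perturbation measured by $\rho(\Lambda,\tilde\Lambda)$ into a controlled perturbation of the interior data, with the weight $(n+1)^2$ in $\rho$ and the asymptotics \eqref{asymptla} chosen precisely so that the resulting sequences lie in the relevant $l_2$-type space.

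Second, I would assemble the main equation of the method of spectral mappings. Doubling the index by $i\in\{0,1\}$ to carry $\lambda_{n0}:=\lambda_n$ and $\lambda_{n1}:=\tilde\lambda_n$ simultaneously, I introduce the normalized vector functions $\psi_{ni}(x)$ built from $\phi(x,\lambda_{ni})$ and the weights, and write the reconstruction relation in the form $\tilde\psi(x)=\bigl(I+\tilde R(x)\bigr)^{-1}\psi(x)$, where $\tilde R(x)$ is a bounded linear operator in the Hilbert space $\mathcal H$ of square-summable vector sequences, with entries assembled from the differences $\lambda_n-\tilde\lambda_n$ and from the weights. The decisive structural feature is that $\tilde R(x)$ vanishes when $\tilde\Lambda=\Lambda$ and depends continuously on the data, with the uniform operator-norm bound $\|\tilde R(x)\|_{\mathcal H}\le C\,\rho(\Lambda,\tilde\Lambda)$ valid for all $x\in[0,1/2]$.

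Third comes the main technical ingredient: establishing that the underlying vector-functional system $\{\psi_{ni}(x)\}$ is a Riesz basis in $\mathcal H$. This property yields bounded invertibility of the Gram-type operator associated with the reference problem, and, combined with the uniform smallness of $\|\tilde R(x)\|$, it guarantees that $I+\tilde R(x)$ is boundedly invertible with an $x$-independent bound on the inverse once $\eps$ is taken small enough; here is where the local character of the result enters. Solving the main equation produces $\tilde\psi(x)$, after which the reconstruction formulas of the constructive algorithm recover $\tilde q_1\in L_2(0,1/2)$ and $\tilde h_1\in\mathbb R$ (schematically $\tilde q_1-q_1=\tfrac{d}{dx}\varepsilon_0$ and $\tilde h_1-h_1=-\tfrac12\varepsilon_0(0)$, where $\varepsilon_0(x)$ is a convergent series in the entries of $\tilde\psi(x)-\psi(x)$); the assumption $\tilde\Lambda\in\mathbb M$ ensures that $\{\tilde\lambda_n^2\}$ is genuinely the spectrum of the constructed $\tilde L$, which gives local solvability. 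The stability estimate \eqref{estq} then follows by tracking constants: the uniform bound on $(I+\tilde R(x))^{-1}$ gives $\|\psi-\tilde\psi\|_{\mathcal H}\le C\|\tilde R\|\le C\rho(\Lambda,\tilde\Lambda)$, which propagates through the reconstruction formulas to $\|q_1-\tilde q_1\|_{L_2}\le C\rho(\Lambda,\tilde\Lambda)$ and $|h_1-\tilde h_1|\le C\rho(\Lambda,\tilde\Lambda)$ with $C$ depending only on $L$.

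I expect the main obstacle to be the third step: proving the Riesz-basis property of the vector-functional system and deducing from it the bounded invertibility of $I+\tilde R(x)$ with bounds \emph{uniform in} $x\in[0,1/2]$, simultaneously with the Lipschitz estimate $\|\tilde R(x)\|\le C\rho$. Verifying the convergence of the relevant series in $\mathcal H$, the uniformity in $x$, and in particular the correct incorporation of the discontinuity at $d=1/2$ through \eqref{2.3}--\eqref{2.4}, is the delicate core on which both the existence and the quantitative estimates rest.
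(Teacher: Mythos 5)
There is a genuine gap, and it sits at the very foundation of your plan. You propose to run the method of spectral mappings, whose main equation $\tilde\psi(x)=(I+\tilde R(x))^{-1}\psi(x)$ is built from the \emph{complete} spectral data of both problems --- eigenvalues \emph{and} norming constants $\al_n$, $\tilde\al_n$. But in Inverse Problem~\ref{ip:1} the norming constants of the sought problem $\tilde L$ are not given and cannot be defined a priori: choosing some $\tilde\al_n$ and demanding realizability with the \emph{fixed} right half $(q_2,h_2,a_1,a_2)$ is precisely the solvability question the theorem is asserting, so your operator $\tilde R(x)$ cannot even be written down from the data, and the bound $\|\tilde R(x)\|\le C\rho(\Lambda,\tilde\Lambda)$ has nothing to act on. Your first step also overclaims: the condition $\Delta(\la_n)=0$ with \eqref{char} yields only \emph{one scalar linear relation per eigenvalue} between $\vv_1(1/2,\la_n)$ and $\vv_1'(1/2,\la_n)$, not the interior Cauchy data themselves. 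Finally, your closing assertion that ``the assumption $\tilde\Lambda\in\mathbb M$ ensures that $\{\tilde\la_n^2\}$ is genuinely the spectrum of the constructed $\tilde L$'' is exactly the nontrivial matching step, and it is left unproved in your sketch.

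The paper's actual route turns the one-relation-per-eigenvalue observation into its engine: the relation is rewritten as the main equation \eqref{scal}, $(K,v_n)_H=f_n$, in $H=L_2(0,1/2)\oplus L_2(0,1/2)$, where $K$ encodes the unknown left-half data through \eqref{intpsi}. Theorem~\ref{thm:Riesz} shows $\{v_n\}$ is a Riesz basis (completeness via a Phragm\'en--Lindel\"of argument plus quadratic closeness to an explicit basis); by the perturbation estimates \eqref{estvn}--\eqref{estfn} the perturbed system $\{\tilde v_n\}$ remains a Riesz basis for small $\eps$, so Lemma~\ref{lem:stabR} produces a unique $\tilde K$ with $(\tilde K,\tilde v_n)_H=\tilde f_n$ and $\|K-\tilde K\|_H\le C\rho(\Lambda,\tilde\Lambda)$. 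From $\tilde K$ the functions $\tilde\eta_1,\tilde\eta_2$ are formed by \eqref{eta1}--\eqref{eta2}, their zeros are localized by Rouch\'e's theorem and a Taylor expansion (Lemma~\ref{lem:zeros}), and the local Borg two-spectra theorem (Theorem~\ref{thm:Borg}) on $(0,1/2)$ then delivers $\tilde q_1$, $\tilde h_1$ with the estimates \eqref{estq}; no norming constants are ever needed. The matching step you asserted is verified directly: since $(\tilde K,\tilde v_n)_H=\tilde f_n$, the characteristic function of the constructed $\tilde L$ satisfies $\tilde\Delta(\tilde\la_n)=0$, so $\{\tilde\la_n^2\}$ is indeed its spectrum. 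Your correct intuition --- that a Riesz-basis property carries both local solvability and Lipschitz stability --- is used in the paper at the interpolation step $(K,v_n)_H=f_n$, not to invert an operator $I+\tilde R(x)$ of the spectral-mappings type; as written, your reduction would only apply if the full spectral data of $\tilde L$ were part of the given information, which it is not.
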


Comparing with the existence theorem in \cite{BY19}, Theorem~\ref{thm:loc} has such an advantage, that
it contains only one simple requirement.
Namely, the numbers $\{ \tilde \la_n^2 \}_{n \ge 0}$ have to be ``sufficiently close'' in some sense to the eigenvalues $\{ \la_n^2 \}_{n \ge 0}$
of the known problem $L$. The conditions of Theorem~2.1 in \cite{BY19} are more complicated. However,
Theorem~2.1 in~\cite{BY19} establishes the global solvability of Inverse Problem~\ref{ip:1}, while Theorem~\ref{thm:loc} has a local nature.

In order to prove Theorem~\ref{thm:loc}, we develop a new algorithm for solving Inverse Problem~\ref{ip:1}. Our method
is based on construction of a special Riesz basis in an appropriate Hilbert space of vector-functions.
Analogous ideas have been used to prove local solvability and stability of a partial inverse problem for the Sturm-Liouville operator on a graph
\cite{Bond18} and of the inverse transmission eigenvalue problem \cite{BB17}.
Basic information about Riesz bases can be found in \cite[Section~1.8.5]{FY01}.

Our approach can be easily generalized to the case
$0 < d < 1/2$. In this case, one can recover the potential $q_1(x)$ and the coefficient $h_1$ from some
fractional part of the spectrum.
Let $I$ be a fixed infinite subset of $\mathbb N \cup \{ 0 \}$. Consider the following partial inverse problem.

\begin{ip} \label{ip:2}
Suppose that $0 < d < 1/2$. Given the eigenvalues $\{ \la_n^2 \}_{n \in I}$, the potential $q(x)$ for $x \in (d, 1)$ and the
coefficients $h_2$, $a_1$, $a_2$, $\om_1 := h_1 + \int_0^d q_1(x) \,dx$, construct $q(x)$ for $x \in (0, d)$ and $h_1$.
\end{ip}

Uniqueness and existence of solution of Inverse Problem~\ref{ip:2} certainly depend on the discontinuity position $d$ and on the set $I$.
Note that, in some special cases, the numbers $a_2$ and $\om_1$ can be determined from eigenvalue asymptotics
(in particular, when $d = 1/2$, $I = \mathbb N \cup \{ 0 \}$).

In this paper, we prove the uniqueness theorem and develop a constructive algorithm for solving Inverse Problem~\ref{ip:2}
under some conditions. Local solvability and stability of this problem are also established.

The paper is organized as follows. In Section~2, the main equation is derived for solving Inverse Problems~\ref{ip:1}
and~\ref{ip:2}. Using this equation, we obtain reconstruction algorithm for Inverse Problem~\ref{ip:1} for $d = 1/2$.
Section~3 is devoted  to the proof of Theorem~\ref{thm:loc}. In Section~4, we study Inverse Problem~\ref{ip:2} (the case $0 < d < 1/2$).
The uniqueness theorem is proved, constructive solution is provided, local solvability and stability are obtained for this case.
Since the general scheme of the proof is similar to the case $d = 1/2$, we focus our attention on the result formulations
and the most important differences, and do not elaborate into details. In conclusion of Section~4, we consider
an example $d = 1/4$.

\bigskip

{\large \bf 2. Constructive solution}

\bigskip

In this section, the main equation for Inverse Problems~\ref{ip:1} and~\ref{ip:2} is derived. A crucial role in our analysis
is played by construction of a special vector-functional system in a Hilbert space.
Further we focus on the case $d = 1/2$, corresponding to Inverse Problem~\ref{ip:1}. For this case, we prove that the constructed system
of vector-functions is a Riesz basis. Finally, we obtain Algorithm~\ref{alg:1} for solution of the inverse problem.

Let us start with some notations.
For $j = 1, 2$, denote by $\vv_j(x, \la)$ the solution of equation~\eqref{2.1}, satisfying the initial conditions
$\vv_j(0, \la) = 1$, $\vv'_j(0, \la) = h_j$.
For any $c > 0$, let $\mathcal{L}^{c}$ be the class of entire functions of exponential type not greater than $c$,
belonging to $L_2(\mathbb{R})$ for real $\la$.
According to the results of~\cite{FY01, Bond18}, the following relations hold for $j = 1, 2$:
\begin{align}\label{2.5}
  & \varphi_j\left(d_j,\lambda\right)=\cos (\la d_j)+\om_j\frac{\sin (\la d_j)}{\lambda}+\frac{\psi^{(j)}_1(\lambda)}{\lambda}, \\
    \label{2.6}
  & \varphi'_j\left(d_j,\lambda\right)=-\lambda\sin (\la d_j) +\om_j \cos (\la d_j) +\psi^{(j)}_2(\lambda),
\end{align}
where $\om_j = h_j + \frac{1}{2} \int_0^{d_j} q_j(x) \, dx$, and the functions
$\psi^{(j)}_k$, $j, k = 1, 2$, belong to the class $\mathcal{L}^{d_j}$. Note that
$\vv_j(x, \la)$ and $\vv_j'(x, \la)$ are even functions of $\la$. Consequently, the function $\psi_1^{(j)}(\la)$ is odd
and the function $\psi_2^{(j)}(\la)$ is even for $j = 1, 2$.
Therefore the functions $\psi_j^{(1)}(\la)$ can be represented in the following form:
\begin{equation} \label{intpsi}
\psi_1^{(1)}(\la) = \int_0^{d} K_1(x) \sin (\la x) \, dx, \quad
\psi_2^{(1)}(\la) = \int_0^{d} K_2(x) \cos (\la x) \, dx, \quad K_j \in L_2(0, d), \: j = 1, 2.
\end{equation}

The eigenvalues $\{ \la_n^2 \}_{n \ge 0}$ of the boundary value problem $L$
coincide with the squared zeros of the characteristic function:
\begin{equation} \label{char}
    \Delta(\la) = a_1 \vv_1(d_1, \la) \vv_2'(d_2, \la) + a_1^{-1} \vv_1'(d_1, \la) \vv_2(d_2, \la) + a_2 \vv_1(d_1, \la) \vv_2(d_2, \la).
\end{equation}

Substituting \eqref{2.5}, \eqref{2.6} and \eqref{intpsi} into \eqref{char} and taking $\la = \la_n$, we arrive at the relation
\begin{multline} \label{long}
    \left( \la_n \cos (\la_n d) + \om_1 \sin (\la_n d) +
\int_0^{d} K_1(x) \sin (\la_n x) \, dx \right) \cdot \frac{1}{\la_n} (a_1 \vv_2'(1-d, \la_n) \\ + a_2 \vv_2(1-d, \la_n))
+ \left(-\la_n \sin (\la_n d) + \om_1 \cos (\la_n d) + \int_0^d K_2(x) \cos (\la_n x) \, dx \right)
\\ \cdot a_1^{-1} \vv_2(1-d, \la_n) = 0, \quad n \ge 0.
\end{multline}

Introduce the Hilbert space $H = L_2(0, d) \oplus L_2(0, d)$ of real-valued vector-functions
$w = \begin{pmatrix} w_1 \\ w_2 \end{pmatrix}$, $w_j \in L_2(0, d)$, $j = 1, 2$. The scalar product and the norm in $H$
are defined as follows:
\begin{gather*}
    (g, w)_H = \int_0^{d} (g_1(x) w_1(x) + g_2(x) w_2(x)) \, dx, \quad
    \| w \|_H^2 = \int_0^{d} (w_1^2(x) + w_2^2(x)) \, dx, \\ g = \begin{pmatrix} g_1 \\ g_2 \end{pmatrix}, \:
     w = \begin{pmatrix} w_1 \\ w_2 \end{pmatrix},
    \quad g, w \in H.
\end{gather*}

Recall that $\la_n > 0$ for all $n \ge 0$.
Clearly, the vector-functions
\begin{gather} \nonumber
    K(x) := \begin{pmatrix} K_1(x) \\ K_2(x) \end{pmatrix}, \\ \label{defv}
    v_n(x) := \begin{pmatrix} \frac{1}{\la_n} (a_1 \vv_2'(1-d,  \la_n) + a_2 \vv_2(1-d, \la_n)) \sin (\la_n x) \\
                             a_1^{-1} \vv_2(1-d, \la_n) \cos (\la_n x) \end{pmatrix}, \: n \ge 0,
\end{gather}
belong to $H$, and the relation~\eqref{long} can be rewritten in the form
\begin{equation} \label{scal}
    (K, v_n)_H = f_n, \quad n \ge 0,
\end{equation}
where
\begin{multline} \label{deff}
    f_n = -\frac{1}{\la_n} (a_1 \vv_2'(1-d,  \la_n) + a_2 \vv_2(1-d, \la_n)) \left( \la_n \cos (\la_n d) +
    \om_1 \sin (\la_n d) \right) \\
     + a_1^{-1} \vv_2(1-d, \la_n) \left(-\la_n \sin (\la_n d) + \om_1 \cos (\la_n d) \right).
\end{multline}

We call the relation~\eqref{scal} {\it the main equation} of Inverse Problems~\ref{ip:1} and~\ref{ip:2}.

Let us first consider the case $d = 1/2$.
Suppose that the eigenvalues $\{ \la_n^2 \}_{n \ge 0}$ together with the potential $q_2$ and the coefficients
$a_1$, $h_2$ are given, and we have to find $q_1$, $h_1$ and $a_2$.
Since $q_2$ and $h_2$ are known, one can easily construct the functions $\vv_2(1/2, \la)$, $\vv_2'(1/2, \la)$
and the constant $\om_2$.

Recall that, for $d = 1/2$, the eigenvalues satisfy the asymptotic formula~\eqref{asymptla},
where (see \cite{Yang14}):
\begin{align} \label{defa}
&  a=\frac{a_2}{a_1+a_1^{-1}}+\frac{a_1-a_1^{-1}}{a_1+a_1^{-1}}(\om_2 - \om_1), \\ \label{defb}
&  b=\frac{a_2}{a_1+a_1^{-1}}+\om_1 + \om_2,
\end{align}
By using the eigenvalues~$\{ \la_n^2 \}_{n \ge 0}$, one can determine the constants $a$ and $b$,
and then find $\om_1$ and $a_2$, solving the linear system~\eqref{defa}-\eqref{defb}.
Now one can construct $\{ v_n \}_{n \ge 0}$ and $\{ f_n \}_{n \ge 0}$, by using the formulas~\eqref{defv} and~\eqref{deff},
respectively.

\begin{thm} \label{thm:Riesz}
The system of vector-functions $\{ v_n \}_{n \ge 0}$, defined by~\eqref{defv}, is a Riesz basis in $H$.
\end{thm}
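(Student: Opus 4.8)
The plan is to prove the Riesz-basis property by comparing the system $\{v_n\}_{n\ge0}$ with an explicit ``model'' system $\{v_n^0\}_{n\ge0}$ whose Riesz-basis property is transparent, and then applying a Bari-type stability theorem \cite[Section~1.8.5]{FY01}. Writing $v_n(x)=\begin{pmatrix} A_n\sin(\la_n x)\\ B_n\cos(\la_n x)\end{pmatrix}$ with $A_n=\frac1{\la_n}(a_1\vv_2'(1/2,\la_n)+a_2\vv_2(1/2,\la_n))$ and $B_n=a_1^{-1}\vv_2(1/2,\la_n)$, I would first substitute the representations \eqref{2.5}--\eqref{2.6} (for $j=2$, $d_2=1/2$) together with the eigenvalue asymptotics \eqref{asymptla}. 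The key observation is a parity dichotomy: for even $n=2m$ one has $\sin(\la_n/2)=O(1/n)$ while $\cos(\la_n/2)=(-1)^m+O(1/n^2)$, and for odd $n=2m+1$ the two roles are interchanged. Consequently $A_{2m}=O(1/n)$, $B_{2m}=a_1^{-1}(-1)^m+O(1/n)$, whereas $A_{2m+1}=-a_1(-1)^m+O(1/n)$, $B_{2m+1}=O(1/n)$. This suggests the model system $v_{2m}^0(x)=\begin{pmatrix}0\\ a_1^{-1}(-1)^m\cos(2m\pi x)\end{pmatrix}$ and $v_{2m+1}^0(x)=\begin{pmatrix}-a_1(-1)^m\sin((2m+1)\pi x)\\ 0\end{pmatrix}$.

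Next I would verify that $\{v_n^0\}$ is a Riesz basis of $H$. The functions $\{\cos(2m\pi x)\}_{m\ge0}$ form the standard cosine orthogonal basis of $L_2(0,1/2)$, while $\{\sin((2m+1)\pi x)\}_{m\ge0}$ is the orthogonal basis of $L_2(0,1/2)$ consisting of the Dirichlet--Neumann eigenfunctions on $(0,1/2)$. Since the even-indexed model vectors lie in $\{0\}\oplus L_2(0,1/2)$ and the odd-indexed ones in $L_2(0,1/2)\oplus\{0\}$, these two mutually orthogonal families together form an orthogonal basis of $H$ whose norms are bounded above and away from zero; hence $\{v_n^0\}$ is a Riesz basis.

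Then I would establish quadratic closeness $\sum_{n\ge0}\|v_n-v_n^0\|_H^2<\iy$. Splitting each difference into a coefficient part and a frequency part, the coefficient differences ($A_n$ or $B_n$ versus their limits) are $O(1/n)$, while $\la_n-n\pi=\frac{(-1)^na+b}{n\pi}+\frac{\be_n}{n}$ gives $\|\sin(\la_nx)-\sin(n\pi x)\|_{L_2(0,1/2)},\ \|\cos(\la_nx)-\cos(n\pi x)\|_{L_2(0,1/2)}=O(1/n)+O(\be_n/n)$; the contributions of the remainders $\psi_k^{(2)}(\la_n)/\la_n$ form $l_2$ sequences because $\psi_k^{(2)}\in\mathcal L^{1/2}\subset L_2(\mathbb R)$ and $\{\la_n\}$ is uniformly separated. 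Squaring and summing yields the claim. Since $\sum\|v_n-v_n^0\|_H^2<\iy$, the operator taking the orthonormal basis associated with $\{v_n^0\}$ to $\{v_n\}$ is a compact (Hilbert--Schmidt) perturbation of an isomorphism, hence Fredholm of index zero; by the stability theorem it is invertible -- so that $\{v_n\}$ is a Riesz basis -- precisely when $\{v_n\}$ is complete in $H$.

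The remaining and most delicate step is completeness, which I expect to be the main obstacle. Suppose $g=(g_1,g_2)\in H$ satisfies $(g,v_n)_H=0$ for all $n\ge0$, and set $G_1(\la):=\int_0^{1/2}g_1(x)\sin(\la x)\,dx$, $G_2(\la):=\int_0^{1/2}g_2(x)\cos(\la x)\,dx$. Then the even entire function $\Phi(\la):=\frac1\la(a_1\vv_2'(1/2,\la)+a_2\vv_2(1/2,\la))\,G_1(\la)+a_1^{-1}\vv_2(1/2,\la)\,G_2(\la)$ has exponential type at most $1$ and vanishes at every $\pm\la_n$, i.e. at all zeros of the characteristic function $\Delta$ from \eqref{char}. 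The plan is to show that $\Phi/\Delta$ is entire of exponential type $0$ and bounded on the real and imaginary axes -- using $|\Delta(\la)|\gtrsim|\la||\sin\la|$ away from its zeros on $\mathbb R$ together with $\Phi(it)/\Delta(it)\to0$ as $t\to\iy$ -- so that Phragm\'en--Lindel\"of and Liouville force $\Phi/\Delta$ to be constant, and the decay on the imaginary axis makes that constant zero; thus $\Phi\equiv0$. Finally, evaluating $\Phi\equiv0$ at the (real, sufficiently dense) zeros of $\vv_2(1/2,\cdot)$, where the second summand vanishes and $\vv_2'(1/2,\cdot)\ne0$, gives $G_1\equiv0$ and hence $g_1=0$; then $\vv_2(1/2,\cdot)\,G_2\equiv0$ yields $G_2\equiv0$ and $g_2=0$. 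This completes the reduction, and the careful growth estimate for $\Phi/\Delta$ is the technical heart of the argument.
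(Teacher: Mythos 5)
Your proposal is correct and follows essentially the same route as the paper: completeness is established via the same entire function $W(\la)=\Phi(\la)$, divided by the characteristic function $\Delta(\la)$ and killed by a Phragm\'en--Lindel\"of/Liouville argument (the paper's Lemma~\ref{lem:complete}), and this is combined with quadratic closeness to exactly the same model system $\{v_n^0\}$ (the paper's Lemma~\ref{lem:asymptv}, your signs agree since $(-1)^{k+1}=-(-1)^k$) and a Bari-type stability theorem from \cite[Section~1.8.5]{FY01}. Your extra details --- the Fredholm index-zero mechanism behind the stability theorem, the axis estimates instead of the paper's sector estimates $|\Delta(\la)|\ge C|\la|\exp(|\mbox{Im}\,\la|)$ for $\eps<|\arg\la|<\pi-\eps$, and the evaluation at the zeros of $\vv_2(1/2,\cdot)$ to get $g_1=g_2=0$ --- merely fill in steps the paper leaves implicit.
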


The proof of Theorem~\ref{thm:Riesz} will rely on the two following Lemmas.

\begin{lem} \label{lem:complete}
The system $\{ v_n \}_{n \ge 0}$ is complete in $H$.
\end{lem}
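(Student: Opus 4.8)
The plan is to argue by contradiction: assume that some $g = \begin{pmatrix} g_1 \\ g_2 \end{pmatrix} \in H$ satisfies $(g, v_n)_H = 0$ for all $n \ge 0$, and deduce that $g = 0$. Introduce the two entire functions
\[
   G_1(\la) = \int_0^{1/2} g_1(x) \sin (\la x) \, dx, \qquad G_2(\la) = \int_0^{1/2} g_2(x) \cos (\la x) \, dx,
\]
which, by Plancherel's theorem, belong to the class $\mathcal L^{1/2}$. Recalling~\eqref{defv}, the orthogonality relations $(g, v_n)_H = 0$ take the compact form $\Phi(\la_n) = 0$ for all $n \ge 0$, where
\[
   \Phi(\la) := \tfrac{1}{\la}\bigl(a_1 \vv_2'(1/2, \la) + a_2 \vv_2(1/2, \la)\bigr) G_1(\la) + a_1^{-1}\vv_2(1/2, \la) G_2(\la).
\]
Since $\vv_2(1/2, \la)$ and $\vv_2'(1/2, \la)$ are even in $\la$, while $G_1$ is odd and $G_2$ is even, the function $\Phi$ is entire and even, so $\Phi(\pm \la_n) = 0$ for all $n$.

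Next I would compare $\Phi$ with the characteristic function $\Delta$ of~\eqref{char}. The Paley--Wiener bounds $|G_j(\la)| \le C e^{|\mathrm{Im}\,\la|/2}$ together with~\eqref{2.5}--\eqref{2.6}, which give $|\vv_2(1/2,\la)| \le C e^{|\mathrm{Im}\,\la|/2}$ and $|\vv_2'(1/2,\la)| \le C|\la| e^{|\mathrm{Im}\,\la|/2}$, yield $|\Phi(\la)| \le C e^{|\mathrm{Im}\,\la|}$ for $|\la| \ge 1$. On the other hand, $\Delta$ is entire of exponential type $1$ with simple zeros exactly at $\{\pm \la_n\}$, and its standard asymptotics (the leading term being $\Delta(\la) \sim -\tfrac{\la}{2}(a_1 + a_1^{-1})\sin\la$) provide a lower bound $|\Delta(\la)| \ge c|\la| e^{|\mathrm{Im}\,\la|}$ on a sequence of expanding circles $|\la| = R_k \to \infty$ chosen to avoid the zeros. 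Because $\Phi$ vanishes at every (simple) zero of $\Delta$, the ratio $\Phi/\Delta$ is entire; the two estimates give $|\Phi/\Delta| \le C/R_k \to 0$ on the circles, so the maximum modulus principle forces $\Phi \equiv 0$.

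It then remains to extract $G_1 \equiv G_2 \equiv 0$ from $\Phi \equiv 0$, which I rewrite as
\[
   \bigl(a_1 \vv_2'(1/2, \la) + a_2 \vv_2(1/2, \la)\bigr) G_1(\la) = -a_1^{-1}\,\la\, \vv_2(1/2, \la) G_2(\la).
\]
At each zero $\nu$ of $\vv_2(1/2, \cdot)$ the right-hand side vanishes; moreover $\vv_2'(1/2, \nu) \ne 0$ there (otherwise $\vv_2(\cdot, \nu)$ would have zero Cauchy data at $x = 1/2$ and vanish identically, contradicting $\vv_2(0,\nu)=1$), so the bracket is nonzero and hence $G_1(\nu) = 0$. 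As the zeros of $\vv_2(1/2,\cdot)$ are simple, the quotient $R := G_1/\vv_2(1/2,\cdot)$ is entire. Since $\vv_2(1/2,\cdot)$ is a sine-type function with $|\vv_2(1/2,\la)| \ge c\, e^{|\mathrm{Im}\,\la|/2}$ away from its zeros, whereas $|G_1(\la)| \le C e^{|\mathrm{Im}\,\la|/2}$, the function $R$ is bounded on $\mathbb C$ and hence constant by Liouville's theorem. But then $G_1 = R \cdot \vv_2(1/2, \cdot)$, and since $G_1 \in L_2(\mathbb R)$ on the real axis while $\vv_2(1/2,\la) \sim \cos(\la/2) \notin L_2(\mathbb R)$, we must have $R \equiv 0$, i.e. $G_1 \equiv 0$. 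Substituting back gives $\vv_2(1/2,\la) G_2(\la) \equiv 0$ and thus $G_2 \equiv 0$. Finally, $G_1 \equiv G_2 \equiv 0$ mean that all sine- and cosine-moments of $g_1, g_2$ vanish, so $g_1 = g_2 = 0$ in $L_2(0,1/2)$, proving completeness.

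I expect the comparison step yielding $\Phi \equiv 0$ to be the main obstacle: it requires a precise matching of exponential types and, in particular, a uniform lower bound for $|\Delta|$ on contours threading between the clustering zeros, so that $\Phi/\Delta$ genuinely decays. The subsequent Liouville argument applied to $R = G_1/\vv_2(1/2,\cdot)$ is the decisive device for separating $G_1$ from $G_2$ within a single scalar identity.
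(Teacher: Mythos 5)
Your proof is correct and follows the paper's strategy in its essential structure: take an element orthogonal to all $v_n$, package the orthogonality relations into an entire function (your $\Phi$ is exactly the paper's $W$ from \eqref{defH}), note that it vanishes at $\pm\la_n$ so that $\Phi/\Delta$ is entire, kill $\Phi$ by growth estimates, and then extract $g_1 = g_2 = 0$. The two points where you diverge are worth recording. First, in the step forcing $\Phi \equiv 0$: the paper proves the bounds $|W(\la)| \le C e^{|\mathrm{Im}\,\la|}$ and $|\Delta(\la)| \ge C|\la| e^{|\mathrm{Im}\,\la|}$ only in the sectors $\eps < |\arg\la| < \pi - \eps$ and then invokes Phragm\'en--Lindel\"of together with Liouville's theorem; this deliberately sidesteps what you yourself flagged as the main obstacle, namely a lower bound for $|\Delta|$ near the real axis where the zeros cluster. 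Your maximum-modulus argument on expanding circles does work --- since $\Delta(\la) = -\tfrac{\la}{2}(a_1+a_1^{-1})\sin\la + O\bigl(e^{|\mathrm{Im}\,\la|}\bigr)$ and $\la_n = n\pi + O(n^{-1})$, circles of radius $R_k = (k+\tfrac12)\pi$ eventually keep a fixed distance from all zeros and the standard estimate $|\sin\la| \ge c\, e^{|\mathrm{Im}\,\la|}$ there yields $|\Delta| \ge c R_k e^{|\mathrm{Im}\,\la|}$ --- but it genuinely requires this extra verification, so the paper's sector route is the more economical of the two. Second, your closing step is a real improvement in completeness: the paper merely asserts that $W \equiv 0$ together with \eqref{defH} implies $w_1 = w_2 = 0$, whereas you prove it, by evaluating the identity at the simple zeros of $\vv_2(1/2,\cdot)$ (correctly noting $\vv_2'(1/2,\nu) \neq 0$ there by uniqueness for the Cauchy problem), forming the entire quotient $R = G_1/\vv_2(1/2,\cdot)$, and eliminating the constant via the $L_2(\mathbb R)$ mismatch with $\cos(\la/2)$. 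The only quibble: in the degenerate case where $\la = 0$ is a zero of $\vv_2(1/2,\cdot)$ (then a double zero in the variable $\la$), the entirety of $R$ needs the extra order of vanishing of the odd function $G_1$ at $0$; this does follow from the identity $\Phi \equiv 0$ by matching Taylor coefficients, but deserves a line. With that caveat, your argument is complete.
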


\begin{proof}
Suppose that, on the contrary, the system $\{ v_n \}_{n \ge 0}$ is not complete in $H$. Then there exists
a nonzero element $w \in H$, such that $(w, v_n)_H = 0$, $n \ge 0$. In other words, there exist functions
$w_1$ and $w_2$ from $L_2(0, 1/2)$, such that
$$
    \int_0^{1/2}\!\!\left(\!w_1(x) \frac{1}{\la_n} (a_1 \vv_2'(1/2, \la_n) \!+\! a_2 \vv_2(1/2, \la_n)) \sin (\la_n x) \!+
    \!w_2(x) a_1^{-1} \vv_2(1/2, \la_n) \cos (\la_n x) \!\right) \, \!dx \!=\! 0,
$$
for all $n \ge 0$. Hence the entire function
\begin{multline} \label{defH}
    W(\la) := \int_0^{1/2} \biggl(w_1(x) \frac{1}{\la} (a_1 \vv_2'(1/2, \la) + a_2 \vv_2(1/2, \la)) \sin (\la x) \\ +
    w_2(x) a_1^{-1} \vv_2(1/2, \la) \cos (\la x) \biggr) \, dx
\end{multline}
has the zeros $\{ \pm \la_n \}_{n \ge 0}$.
Consequently, the function $\dfrac{W(\la)}{\Delta(\la)}$ is entire. By virtue of \eqref{2.5}, \eqref{2.6}, \eqref{char}
and~\eqref{defH}, the following estimates hold:
$$
    |W(\la)| \le C \exp(|\mbox{Im}\,\la|), \quad |\Delta(\la)|\ge C |\la|\exp(|\mbox{Im}\,\la|),
$$
for $|\la| > \la^*$, $\eps < |\arg \la| < \pi - \eps$, where $\la^*$ and $\eps$ are some positive numbers.
Hence
$$
    \left| \frac{W(\la)}{\Delta(\la)}\right| \le \frac{C}{|\la|}
$$
for the same values of $\la$. Applying Phragmen-Lindel\"of's theorem \cite{BFY} and Liouville's theorem to the function
$\dfrac{W(\la)}{\Delta(\la)}$, we conclude that $W(\la) \equiv 0$. This together with~\eqref{defH} imply that $w_1 = w_2 = 0$
in $L_2(0, 1/2)$, so $w = 0$ in $H$. Thus, we have arrived at the contradiction, that proves the completeness of the system
$\{ v_n \}_{n \ge 0}$.
\end{proof}

\begin{lem} \label{lem:asymptv}
The following asymptotic relation holds:
$$
    v_n(x) = v_n^0(x) + O\left( n^{-1} \right), \quad n \to \iy,
$$
where
$$
    v_{2k}^0(x) = (-1)^k a_1^{-1} \begin{pmatrix}
	                             0 \\ \cos (2 k \pi x)		
                                \end{pmatrix},
    v_{2k+1}^0(x) = (-1)^{k+1} a_1 \begin{pmatrix}
                                     \sin ((2 k + 1) \pi x) \\ 0
                                \end{pmatrix}, \quad k \ge 0,
$$
and the $O$-estimate is uniform with respect to $x \in [0, 1/2]$.
\end{lem}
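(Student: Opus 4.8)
The plan is to substitute the representations \eqref{2.5}, \eqref{2.6} (taken with $j = 2$, $d_2 = 1/2$) into the definition \eqref{defv} of $v_n$ and then read off the leading terms using the eigenvalue asymptotics. The only consequence of \eqref{asymptla} actually needed here is the crude estimate $\la_n = \pi n + O(n^{-1})$. This immediately yields, uniformly for $x \in [0, 1/2]$, the estimates $\sin(\la_n x) = \sin(\pi n x) + O(n^{-1})$ and $\cos(\la_n x) = \cos(\pi n x) + O(n^{-1})$, since $|\sin(\la_n x) - \sin(\pi n x)| \le |x|\,|\la_n - \pi n| \le \tfrac12|\la_n - \pi n|$, and similarly for the cosine. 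Specializing to the argument $\la_n/2$ and separating the parities $n = 2k$ and $n = 2k+1$ gives $\cos(\la_{2k}/2) = (-1)^k + O(n^{-1})$, $\sin(\la_{2k}/2) = O(n^{-1})$, and $\cos(\la_{2k+1}/2) = O(n^{-1})$, $\sin(\la_{2k+1}/2) = (-1)^k + O(n^{-1})$.

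Next I would dispose of the remainder functions. Since $\psi_k^{(2)} \in \mathcal L^{1/2}$ and, by the same reasoning that produced \eqref{intpsi}, is a sine/cosine transform of an $L_2(0,1/2) \subset L_1(0,1/2)$ kernel, it is bounded on the real axis; in particular $\psi_k^{(2)}(\la_n) = O(1)$ for $k = 1, 2$. Combining this with \eqref{2.5}, \eqref{2.6} and the parity estimates, I obtain the scalar asymptotics: for $n = 2k$, $\vv_2(1/2, \la_{2k}) = (-1)^k + O(n^{-1})$ and $\vv_2'(1/2, \la_{2k}) = O(1)$; for $n = 2k+1$, $\vv_2(1/2, \la_{2k+1}) = O(n^{-1})$ and $\vv_2'(1/2, \la_{2k+1}) = -(-1)^k \la_{2k+1} + O(1) = (-1)^{k+1}(2k+1)\pi + O(1)$. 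The essential point, and the step requiring the most care, is the treatment of the dominant term $-\la\sin(\la/2)$ in $\vv_2'$: for odd index it survives at order $O(n)$ and supplies the factor $\pm a_1$ after division by $\la_n$, whereas for even index it collapses to $O(1)$ precisely because $\sin(\la_{2k}/2) = O(n^{-1})$. One must also check that the bounded contributions $\psi_k^{(2)}(\la_n) = O(1)$ never survive: each of them enters $v_n$ only after division by $\la_n = O(n)$, hence is absorbed into $O(n^{-1})$.

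Finally I would substitute these scalar asymptotics into \eqref{defv}. In the even case the second-component factor $a_1^{-1}\vv_2(1/2, \la_{2k}) = (-1)^k a_1^{-1} + O(n^{-1})$, which multiplied by $\cos(\la_{2k} x) = \cos(2 k\pi x) + O(n^{-1})$ gives exactly $v_{2k}^0$ up to $O(n^{-1})$, while the first-component factor $\la_{2k}^{-1}(a_1\vv_2'(1/2,\la_{2k}) + a_2\vv_2(1/2,\la_{2k})) = O(n^{-1})$ is negligible. In the odd case the first-component factor $\la_{2k+1}^{-1}(a_1\vv_2'(1/2, \la_{2k+1}) + a_2\vv_2(1/2, \la_{2k+1})) = (-1)^{k+1} a_1 + O(n^{-1})$, which multiplied by $\sin(\la_{2k+1} x) = \sin((2k+1)\pi x) + O(n^{-1})$ reproduces $v_{2k+1}^0$ up to $O(n^{-1})$, while the second component $a_1^{-1}\vv_2(1/2, \la_{2k+1})\cos(\la_{2k+1} x) = O(n^{-1})$. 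Since every $O$-term traces back either to $\la_n - \pi n = O(n^{-1})$ or to uniformly bounded trigonometric factors, all estimates hold uniformly in $x \in [0, 1/2]$, which is exactly the claim.
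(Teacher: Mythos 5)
Your proof is correct, and there is nothing to compare it against: the paper states Lemma~\ref{lem:asymptv} without proof, passing directly to the proof of Theorem~\ref{thm:Riesz}. Your computation --- substituting \eqref{2.5}--\eqref{2.6} into \eqref{defv}, using $\lambda_n = \pi n + O(n^{-1})$ from \eqref{asymptla} to split by parity, and noting that $\psi_k^{(2)}$ is bounded on the real axis so that every such term is absorbed after division by $\lambda_n$ --- is precisely the routine argument the authors leave to the reader, carried out correctly and with the uniformity in $x \in [0,1/2]$ properly accounted for.
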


\begin{proof}[Proof of Theorem~\ref{thm:Riesz}]
By virtue of Lemmas~\ref{lem:complete} and~\ref{lem:asymptv}, the system $\{ v_n \}_{n \ge 0}$ is complete in $H$
and quadratically close to the system $\{ v_n^0 \}_{n \ge 0}$. The latter system, obviously, is a Riesz basis in $H$.
Hence $\{ v_n \}_{n \ge 0}$ is also a Riesz basis.
\end{proof}

Thus, the numbers $\{ f_n \}_{n \ge 0}$, defined by~\eqref{deff}, are the coordinates of the vector-function $K$
with respect to the Riesz basis, orthonormal to $\{ v_n \}_{n \ge 0}$. One can recover $K$ from these coordinates,
and then find $\vv_1(1/2, \la)$ and $\vv_1'(1/2, \la)$ by~\eqref{2.5}, \eqref{2.6} and \eqref{intpsi}.

The function $\dfrac{\vv_1'(1/2, \la)}{\vv_1(1/2, \la)}$ is the Weyl function of the boundary value problem
$$
    -y_1''(x) + q_1(x) y_1(x) = \la^2 y_1(x), \: x \in (0, 1/2), \quad  y'_1(0)-h_1 y_1(0)=0, \quad y_1(1/2) = 0.
$$
Weyl functions and their generelizations are often used in inverse problem theory as natural spectral characteristics
of various differential operators (see \cite{Mar77, FY01}). The potential $q_1(x)$ and the coefficient $h_1$ can be uniquely
recoovered from the Weyl function (see \cite{FY01}).

Thus, we arrive at the following algorithm for solving Inverse Problem~\ref{ip:1}.

\begin{alg} \label{alg:1}
Suppose that $d = 1/2$. Let $\{ \la_n^2 \}_{n \ge 0}$, $q_2$, $a_1$, $h_2$ be given. The potential $q_1$ and the numbers $h_1$, $a_2$ have
to be found.

\begin{enumerate}
\item Construct the functions $\vv_2(1/2, \la)$ and $\vv'_2(1/2, \la)$, by using $q_2$ and $h_2$.
\item Calculate $\om_2 := h_2 + \frac{1}{2} \int_0^{1/2} q_2(x) \, dx$.
\item Find $a$ and $b$ from~\eqref{asymptla}, by using the formulas:
$$
    \ga_n := (\la_n - \pi n) \pi n, \: n \ge 0, \quad
    a = \frac{1}{2} \lim_{n \to \iy} (\ga_{2n} - \ga_{2n+1}), \quad
    b = \frac{1}{2} \lim_{n \to \iy} (\ga_{2n} + \ga_{2n+1}).
$$
\item Solving the system of linear equations~\eqref{defa}-\eqref{defb}, find $\om_1$ and $a_2$:
$$
    \om_1 = -\frac{1}{2 a_1} ((a_1 + a_1^{-1}) (a - b) + 2 a_1^{-1} \om_2), \quad
    a_2 = (b - \om_1 - \om_2)(a_1 + a_1^{-1}).
$$
\item Construct the vector-functions $\{ v_n(x) \}_{n \ge 0}$ and the numbers $\{ f_n \}_{n \ge 0}$, using~\eqref{defv} and~\eqref{deff}, respectively.
\item Determine the vector-function $K \in H$, by using its coordinates $f_n$:
$$
    K(x) = \sum_{n = 0}^{\iy} f_n v_n^*(x),
$$
where $\{ v_n^*(x) \}_{n \ge 0}$ is the basis, biorthonormal to $\{ v_n(x) \}_{n \ge 0}$ in $H$.
\item Using the components $K_1(x)$ and $K_2(x)$ of $K(x)$, construct $\psi_1^{(1)}(\la)$ and $\psi_2^{(1)}(\la)$
by~\eqref{intpsi}, and then calculate $\vv_1(d, \la)$ and $\vv_1'(d, \la)$ by~\eqref{2.5} and~\eqref{2.6}.
\item Recover $q_1$ and $h_1$ from the Weyl function $\dfrac{\vv_1'(d,\la)}{\vv_1(d,\la)}$, solving the classical inverse problem
by the method of spectral mappings (see \cite{FY01}).
\end{enumerate}
\end{alg}

\bigskip

{\large \bf 3. Local solvability and stability}

\bigskip

The goal of this section is to prove Theorem~\ref{thm:loc}.
Suppose that we have a fixed problem $L = L(1/2, q_1, q_2, h_1, h_2, a_1, a_2)$ and its spectrum $\{ \la_n^2 \}_{n \ge 0}$.
Let us apply Algorithm~\ref{alg:1} to the sequence $\{ \tilde \la_n^2 \}_{n \ge 0}$ together with the data
$(q_2, h_2, a_1)$.
For sufficiently small $\eps > 0$, the condition $\rho(\Lambda, \tilde \Lambda) \le \eps$ imples that
the values $\{ \tilde \la_n \}_{n \ge 0}$ are distinct and positive.
If a certain symbol $\ga$ denotes an object, related to the problem $L$, we will denote
by $\tilde \ga$ with tilde the analogous object, constructed by $\{ \tilde \la_n^2 \}_{n \ge 0}$ and $(q_2, h_2, a_1)$. Note that the steps 1-4 of
Algorithm~\ref{alg:1} do not use $\{ \tilde \la_n^2 \}_{n \ge 0}$, so $\tilde \vv_2(1/2, \la) \equiv \vv_2(1/2, \la)$,
$\tilde \vv_2'(1/2, \la) = \vv_2'(1/2, \la)$, $\tilde \om_j = \om_j$, $j = 1, 2$, $\tilde a_2 = a_2$.
Then consider the system of vector-functions $\{ \tilde v_n(x) \}_{n \ge 0}$, defined as follows:
\begin{equation} \label{deftv}
    \tilde v_n(x) := \begin{pmatrix} \frac{1}{\tilde \la_n} (a_1 \vv_2'(1/2,  \tilde \la_n) + a_2 \vv_2(1/2, \tilde \la_n)) \sin (\tilde \la_n x) \\
                             a_1^{-1} \vv_2(1/2, \tilde \la_n) \cos (\tilde \la_n x) \end{pmatrix}, \: n \ge 0,
\end{equation}
and the sequence $\{ \tilde f_n \}_{n \ge 0}$, defined similarly to~\eqref{deff}, but with $\tilde \la_n$ instead of $\la_n$.
\begin{lem}
For every problem $L$, there exists $\eps > 0$, such that for any sequence
$\tilde \Lambda = \{ \tilde \la_n \}_{n \ge 0} \in \mathbb M$, satisfying the estimate $\rho(\Lambda, \tilde \Lambda) \le \eps$,
then the following estimates are valid
\begin{gather} \label{estvn}
    \left( \sum_{n = 0}^{\iy} \| v_n - \tilde v_n \|_H^2 \right)^{1/2} \le C \rho(\Lambda, \tilde \Lambda), \\ \label{estfn}
    \left( \sum_{n = 0}^{\iy} (f_n - \tilde f_n)^2 \right)^{1/2} \le C \rho(\Lambda, \tilde \Lambda).
\end{gather}
\end{lem}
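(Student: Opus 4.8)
The plan is to reduce both estimates to pointwise bounds on the derivatives in the spectral parameter. First I would observe that, since Steps~1--4 of Algorithm~\ref{alg:1} do not involve the perturbed eigenvalues, the functions $\vv_2(1/2,\la)$, $\vv_2'(1/2,\la)$ and the constants $\om_1,a_2$ coincide for $L$ and $\tilde L$. Hence both $v_n$ and $\tilde v_n$ are obtained by evaluating one and the same $H$-valued function
$$
   V(x,\la) := \begin{pmatrix} \frac1\la(a_1\vv_2'(1/2,\la)+a_2\vv_2(1/2,\la))\sin(\la x) \\ a_1^{-1}\vv_2(1/2,\la)\cos(\la x)\end{pmatrix}
$$
at $\la=\la_n$ and $\la=\tilde\la_n$, and likewise $f_n=F(\la_n)$, $\tilde f_n=F(\tilde\la_n)$, where $F$ is the right-hand side of~\eqref{deff} regarded as a function of $\la$. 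Writing the differences as integrals of the $\la$-derivatives then gives
$$
   \|v_n-\tilde v_n\|_H \le |\la_n-\tilde\la_n|\sup_{\xi}\|\partial_\la V(\cdot,\xi)\|_H, \qquad |f_n-\tilde f_n| \le |\la_n-\tilde\la_n|\sup_\xi|F'(\xi)|,
$$
the suprema being taken over $\xi$ between $\la_n$ and $\tilde\la_n$.

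Next I would fix the range of $\xi$. For $\eps$ small the assumptions $\tilde\Lambda\in\mathbb M$ and $\rho(\Lambda,\tilde\Lambda)\le\eps$ guarantee that all the $\la_n,\tilde\la_n$ are positive, bounded below by some $\delta>0$, and satisfy $\la_n,\tilde\la_n=\pi n+O(n^{-1})$; hence every $\xi$ between them obeys $\delta\le\xi\le\pi n+C$. On this range I would estimate the derivatives using~\eqref{2.5}--\eqref{2.6}. For~\eqref{estvn} the point is that in $V$ the growing factor $\vv_2'(1/2,\la)\sim-\la\sin(\la/2)$ is divided by $\la$, so each component of $V(\cdot,\la)$ is a product of a \emph{bounded} function of $\la$ with $\sin(\la x)$ or $\cos(\la x)$. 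Differentiating in $\la$ yields either bounded factors---the derivatives of $\vv_2(1/2,\la)$, of $\vv_2'(1/2,\la)/\la$, and of the Paley--Wiener remainders $\psi^{(2)}_k$ are all bounded on $[\delta,\iy)$, since a function of class $\mathcal L^{1/2}$ and its derivatives are bounded on the real axis by the Paley--Wiener representation---or a factor $x\le\frac12$ in front of bounded trigonometric terms. Thus $\|\partial_\la V(\cdot,\xi)\|_H\le C$ uniformly, which gives $\|v_n-\tilde v_n\|_H\le C|\la_n-\tilde\la_n|$ and, after squaring and summing,
$$
   \sum_{n=0}^\iy\|v_n-\tilde v_n\|_H^2 \le C^2\sum_{n=0}^\iy|\la_n-\tilde\la_n|^2 \le C^2\,\rho(\Lambda,\tilde\Lambda)^2,
$$
proving~\eqref{estvn}.

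For~\eqref{estfn} the decisive difference is that in $F(\la)$ the factors $\la\cos(\la/2)$ and $-\la\sin(\la/2)$ are \emph{not} cancelled by the prefactors $\frac1\la(a_1\vv_2'(1/2,\la)+a_2\vv_2(1/2,\la))$ and $a_1^{-1}\vv_2(1/2,\la)$, which are only bounded. Hence $F(\la)$ grows linearly in $\la$, and the same bookkeeping with~\eqref{2.5}--\eqref{2.6} yields $|F'(\xi)|\le C(1+|\xi|)$. Combined with $\xi\le\pi n+C$ this gives $|f_n-\tilde f_n|\le C(n+1)|\la_n-\tilde\la_n|$, whence
$$
   \sum_{n=0}^\iy(f_n-\tilde f_n)^2 \le C^2\sum_{n=0}^\iy(n+1)^2|\la_n-\tilde\la_n|^2 = C^2\,\rho(\Lambda,\tilde\Lambda)^2,
$$
which is~\eqref{estfn}. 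Note that the weight $(n+1)^2$ built into $\rho$ is exactly what absorbs the extra power of $\la$ coming from the linear growth of $F'$.

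The main obstacle is the uniform control of the derivatives of the Paley--Wiener remainders $\psi^{(2)}_k(\la)$ on the half-line $[\delta,\iy)$, and the verification that, for $\xi$ lying between $\la_n$ and $\tilde\la_n$, one still has $\xi\le\pi n+C$ with a constant independent of $n$; both rest on the common asymptotics of $\la_n$ and $\tilde\la_n$ supplied by $\Lambda,\tilde\Lambda\in\mathbb M$. Once these bounds are in hand, the two estimates differ only by one power of $\la$, which is precisely mirrored in the difference between the plain $H$-norm sum in~\eqref{estvn} and the weighted metric $\rho$ governing~\eqref{estfn}.
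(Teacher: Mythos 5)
Your proposal is correct and takes essentially the same route as the paper's proof: both reduce \eqref{estvn} and \eqref{estfn} to Lipschitz estimates in $\la$ for fixed functions (fixed because steps 1--4 of the algorithm do not involve the perturbed spectrum) evaluated at $\la_n$ and $\tilde\la_n$, with Lipschitz constants $O(1)$ for $v_n$ and $O(n+1)$ for $f_n$, the weight $(n+1)^2$ in $\rho$ absorbing the latter. The only difference is technical: the paper derives the four elementary bounds on $\sin(\la_n x)$, $\cos(\la_n x)$, $\vv_2(1/2,\la_n)$ and $\vv_2'(1/2,\la_n)$ via the standard Schwartz-lemma device of \cite[Section 1.6.1]{FY01} and then combines them through the product structure of \eqref{defv} and \eqref{deff}, whereas you differentiate the composite expressions $V$ and $F$ directly and bound their derivatives on the real axis via the Paley--Wiener representation of $\psi^{(2)}_k$ --- an equivalent way of producing the same constants.
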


Here and below we use the same symbol $C$ for different constants, that depend only on $L$ and do not depend on
$\{ \tilde \la_n^2 \}_{n \ge 0}$, $n$, $x$, etc.

\begin{proof}
Using the standard approach, based on Schwartz's Lemma (see \cite[Section 1.6.1]{FY01}), we obtain the following estimates
for $n \ge 0$:
\begin{gather*}
    |\sin (\la_n x) - \sin (\tilde \la_n x)| \le C |\la_n - \tilde \la_n|, \quad
    |\cos (\la_n x) - \cos (\tilde \la_n x)| \le C |\la_n - \tilde \la_n|, \quad x \in [0, 1/2], \\
    |\vv_2(1/2, \la_n) - \vv_2(1/2, \tilde \la_n)| \le C |\la_n - \tilde \la_n|, \\
    |\vv_2'(1/2, \la_n) - \vv_2'(1/2, \tilde \la_n)| \le C (n + 1) |\la_n - \tilde \la_n|.
\end{gather*}
Using these estimates together with~\eqref{defv}, \eqref{deff} and~\eqref{deftv}, we arrive at~\eqref{estvn} and~\eqref{estfn}.
\end{proof}

\begin{lem} \label{lem:stabR}
For every problem $L$, there exists $\eps > 0$, such that for any sequence
$\tilde \Lambda = \{ \tilde \la_n \}_{n \ge 0} \in \mathbb M$, satisfying the estimate $\rho(\Lambda, \tilde \Lambda) \le \eps$,
there exists a unique vector-function $\tilde K \in H$, such that $(\tilde K, \tilde v_n) = \tilde f_n$ for all $n \ge 0$.
Moreover, the estimate $\| K - \tilde K \|_{H} \le C \rho(\Lambda, \tilde \Lambda)$ is valid.
\end{lem}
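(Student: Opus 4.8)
The plan is to recast the main equation in operator form and treat the perturbed system $\{ \tilde v_n \}_{n \ge 0}$ as a small perturbation of the Riesz basis $\{ v_n \}_{n \ge 0}$. Introduce the analysis operator $G \colon H \to l_2$, $G g = \{ (g, v_n)_H \}_{n \ge 0}$. Since $\{ v_n \}_{n \ge 0}$ is a Riesz basis in $H$ by Theorem~\ref{thm:Riesz}, the operator $G$ is bounded and boundedly invertible (see \cite[Section~1.8.5]{FY01}), and the vector-function $K$ of Step~6 of Algorithm~\ref{alg:1} equals $K = G^{-1} f$, where $f = \{ f_n \}_{n \ge 0} \in l_2$. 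Define analogously $\tilde G g = \{ (g, \tilde v_n)_H \}_{n \ge 0}$; the requirement $(\tilde K, \tilde v_n)_H = \tilde f_n$, $n \ge 0$, is then the equation $\tilde G \tilde K = \tilde f$, so the assertion of the Lemma reduces to the invertibility of $\tilde G$ together with a norm estimate for $G^{-1} - \tilde G^{-1}$.

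First I would estimate the operator norm of $G - \tilde G$. For any $g \in H$, the Cauchy--Schwarz inequality gives
\[
    \| (G - \tilde G) g \|_{l_2}^2 = \sum_{n = 0}^{\iy} |(g, v_n - \tilde v_n)_H|^2 \le \| g \|_H^2 \sum_{n = 0}^{\iy} \| v_n - \tilde v_n \|_H^2,
\]
so, by~\eqref{estvn}, $\| G - \tilde G \| \le C \rho(\Lambda, \tilde \Lambda) \le C \eps$. Choosing $\eps$ small enough that $C \eps \le \tfrac{1}{2} \| G^{-1} \|^{-1}$, the standard Neumann-series perturbation argument shows that $\tilde G$ is boundedly invertible with $\| \tilde G^{-1} \| \le 2 \| G^{-1} \|$, a bound depending only on $L$. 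In particular $\{ \tilde v_n \}_{n \ge 0}$ is a Riesz basis. Since $\tilde f = f - (f - \tilde f) \in l_2$ by~\eqref{estfn}, the vector-function $\tilde K := \tilde G^{-1} \tilde f$ is the unique element of $H$ satisfying $(\tilde K, \tilde v_n)_H = \tilde f_n$ for all $n \ge 0$.

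It remains to estimate $\| K - \tilde K \|_H$. Writing
\[
    K - \tilde K = G^{-1} f - \tilde G^{-1} \tilde f = (G^{-1} - \tilde G^{-1}) f + \tilde G^{-1}(f - \tilde f)
\]
and using the resolvent identity $G^{-1} - \tilde G^{-1} = G^{-1}(\tilde G - G) \tilde G^{-1}$, I would bound the two terms separately. For the first,
\[
    \| (G^{-1} - \tilde G^{-1}) f \|_H \le \| G^{-1} \| \, \| \tilde G - G \| \, \| \tilde G^{-1} \| \, \| f \|_{l_2} \le C \rho(\Lambda, \tilde \Lambda),
\]
since $\| G^{-1} \|$, $\| \tilde G^{-1} \|$ and $\| f \|_{l_2}$ are all bounded by constants depending only on $L$, while $\| \tilde G - G \| \le C \rho(\Lambda, \tilde \Lambda)$ as above. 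For the second term, \eqref{estfn} gives $\| \tilde G^{-1}(f - \tilde f) \|_H \le \| \tilde G^{-1} \| \, \| f - \tilde f \|_{l_2} \le C \rho(\Lambda, \tilde \Lambda)$. Adding these estimates yields $\| K - \tilde K \|_H \le C \rho(\Lambda, \tilde \Lambda)$, as required.

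The conceptual heart of the argument, and its only delicate point, is the uniform invertibility of $\tilde G$: the Riesz-basis property from Theorem~\ref{thm:Riesz} is precisely what makes $G^{-1}$ bounded, and the quadratic-closeness estimate~\eqref{estvn} is precisely what is needed to control $\| G - \tilde G \|$ in operator norm, so that the Neumann series converges. Everything else is routine operator perturbation theory. The main care required is to check that the threshold for $\eps$ and all the constants $C$ depend only on $L$ --- through $\| G^{-1} \|$, $\| f \|_{l_2}$, and the constants in~\eqref{estvn}--\eqref{estfn} --- and not on the particular sequence $\tilde \Lambda$.
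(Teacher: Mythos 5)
Your proposal is correct and follows essentially the same route as the paper: the paper likewise deduces from Theorem~\ref{thm:Riesz} and the quadratic-closeness estimate~\eqref{estvn} that $\{ \tilde v_n \}_{n \ge 0}$ is a Riesz basis for small $\eps$, notes $\{ \tilde f_n \} \in l_2$ via~\eqref{estfn}, defines $\tilde K$ by its coordinates with respect to the biorthonormal basis (equivalent to your $\tilde K = \tilde G^{-1} \tilde f$), and obtains $\| K - \tilde K \|_H \le C \rho(\Lambda, \tilde \Lambda)$ by citing Lemma~5 of \cite{Bond18}. Your analysis-operator and Neumann-series computation simply makes self-contained the two stability facts the paper imports by citation, with the constants correctly tracked as depending only on $L$.
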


\begin{proof}
By virtue of Theorem~\ref{thm:Riesz}, the system $\{ v_n \}_{n \ge 0}$ is a Riesz basis in $H$.
In view of~\eqref{estvn}, for sufficiently small $\eps > 0$, the system $\{ \tilde v_n \}_{n \ge 0}$
is also a Riesz basis.
Since $\{ f_n \}_{n \ge 0}$ are the coordinates of $K \in H$ with respect to the Riesz basis $\{ v_n^* \}_{n \ge 0}$,
we have $\{ f_n \}_{n \ge 0} \in l_2$. The estimate~\eqref{estfn} implies that $\{ \tilde f_n \}_{n \ge 0}\in l_2$.
Then there exists a unique vector-function $\tilde K \in H$ with the coordinates $\{ \tilde f_n \}_{n \ge 0}$
with respect to the Riesz basis $\{ \tilde v_n^* \}_{n \ge 0}$, i.e. $(\tilde K, \tilde v_n) = \tilde f_n$, $n \ge 0$.
By virtue of \cite[Lemma~5]{Bond18}, the following estimate holds: $\| K - \tilde K \|_{H} \le C \rho(\Lambda, \tilde \Lambda)$,
so the proof is finished.
\end{proof}

Using the components $\tilde K_j(x)$, $j = 1, 2$, of the vector-function $\tilde K$, we construct the entire functions
$\tilde \eta_j(\la)$, $j = 1, 2$, as follows:
\begin{align} \label{eta1}
  & \tilde \eta_1(\la)=\cos \frac{\la}{2} +\om_1\frac{\sin \frac{\la}{2}}{\lambda}+
  \frac{1}{\lambda} \int_0^{1/2} \tilde K_1(x) \sin (\la x) \, dx , \\    \label{eta2}
  & \tilde \eta_2(\la)=-\lambda\sin \frac{\la}{2} +\om_1 \cos \frac{\la}{2} + \int_0^{1/2} \tilde K_2(x) \cos (\la x) \, dx.
\end{align}

Denote by $\{ \pm \mu_n \}_{n \ge 0}$, $\{ \pm \nu_n \}_{n \ge 0}$, $\{ \pm \tilde \mu_n \}_{n \ge 0}$ and
$\{ \pm \tilde \nu_n \}_{n \ge 0}$ the zeros of $\vv_1(1/2, \la)$, $\vv_1'(1/2, \la)$, $\tilde \eta_1(\la)$
and $\tilde \eta_2(\la)$, respectively. These functions are even, so if any of them has a zero $\la$, it also has
the zero $(-\la)$.

Note that $\{ \mu_n^2 \}_{n \ge 0}$ and $\{ \nu_n^2 \}_{n \ge 0}$ are the spectra of the following boundary value problems
$L_j$:
\begin{equation} \label{Lj}
    -y_1''(x) + q_1(x) y_1(x) = \la^2 y_1(x), \: x \in (0, 1/2), \quad  y'_1(0)-h_1 y_1(0)=0, \quad y_1^{(j)}(1/2) = 0,
\end{equation}
for $j = 0, 1$, respectively. Hence the eigenvalues $\{ \mu_n^2 \}_{n \ge 0}$ and $\{ \nu_n^2 \}_{n \ge 0}$ are real, simple
and have appropriate asymptotics (see \cite{FY01} for details).
Using the estimates $\| K_j - \tilde K_j \|_{L_2} \le C \rho(\Lambda, \tilde \Lambda)$, $j = 1, 2$,
one can obtain the following result.

\begin{lem} \label{lem:zeros}
For every problem $L$, there exists $\eps > 0$, such that for any sequence
$\tilde \Lambda$, satisfying the estimate $\rho(\Lambda, \tilde \Lambda) \le \eps$, there exists such numeration in the
sequences $\{ \pm \mu_n \}_{n \ge 0}$, $\{ \pm \nu_n \}_{n \ge 0}$, $\{ \pm \tilde \mu_n \}_{n \ge 0}$ and
$\{ \pm \tilde \nu_n \}_{n \ge 0}$, that
\begin{equation} \label{summu}
    \left( \sum_{n = 0}^{\iy} (n + 1)^2 |\mu_n - \tilde \mu_n|^2 \right)^{1/2} \le C \rho(\Lambda, \tilde \Lambda), \quad
    \left( \sum_{n = 0}^{\iy} (n + 1)^2 |\nu_n - \tilde \nu_n|^2 \right)^{1/2} \le C \rho(\Lambda, \tilde \Lambda).
\end{equation}
\end{lem}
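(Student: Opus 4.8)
The plan is to treat the two estimates in \eqref{summu} separately but by the same scheme: compare the zeros of the pair $\vv_1(1/2,\la),\tilde\eta_1(\la)$ (for the $\mu_n$–bound) and of the pair $\vv_1'(1/2,\la),\tilde\eta_2(\la)$ (for the $\nu_n$–bound), exploiting that within each pair the two functions differ only through the $K$–integral. Indeed, since $\tilde\om_1=\om_1$, subtracting \eqref{eta1} from \eqref{2.5} (with $d=1/2$) and \eqref{eta2} from \eqref{2.6} gives
\[
\vv_1(1/2,\la)-\tilde\eta_1(\la)=\frac1\la\int_0^{1/2}(K_1-\tilde K_1)(x)\sin(\la x)\,dx,\qquad \vv_1'(1/2,\la)-\tilde\eta_2(\la)=\int_0^{1/2}(K_2-\tilde K_2)(x)\cos(\la x)\,dx,
\]
and by Lemma~\ref{lem:stabR} the components obey $\|K_j-\tilde K_j\|_{L_2}\le C\rho(\Lambda,\tilde\Lambda)$, $j=1,2$, as already noted before the statement.

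First I would record the asymptotics and derivative bounds. All four functions are entire of sine type: $\vv_1(1/2,\la)$ and $\tilde\eta_1(\la)$ have leading term $\cos\frac\la2$, so their positive zeros satisfy $\mu_n=(2n+1)\pi+O(n^{-1})$ and $\tilde\mu_n=(2n+1)\pi+O(n^{-1})$, while $\vv_1'(1/2,\la),\tilde\eta_2(\la)$ have leading term $-\la\sin\frac\la2$ and positive zeros $\nu_n=2n\pi+O(n^{-1})$, $\tilde\nu_n=2n\pi+O(n^{-1})$; the $O$–constants are uniform because $\|\tilde K_j\|_{L_2}\le\|K_j\|_{L_2}+C\eps$. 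The numbers $\mu_n^2,\nu_n^2$ are real, being the spectra of the self-adjoint problems $L_0,L_1$ in \eqref{Lj}. These asymptotics pin down the matching numeration for all large $n$; for the finitely many remaining indices I would pair the zeros by Rouch\'e's theorem once $\eps$ is small, using that $\tilde\eta_1,\tilde\eta_2$ are uniformly close to $\vv_1(1/2,\cdot),\vv_1'(1/2,\cdot)$ on horizontal strips. The same asymptotics give the derivative control I need: since $\partial_\la\cos\frac\la2=-\frac12\sin\frac\la2=\mp\frac12$ at $(2n+1)\pi$, the $\la$–derivative $\partial_\la\tilde\eta_1$ is nearly equal to a nonzero constant on a fixed small disk about $\mu_n$; and since $\partial_\la\bigl(-\la\sin\frac\la2\bigr)=-\sin\frac\la2-\frac\la2\cos\frac\la2\sim\mp n\pi$ at $2n\pi$, one has $|\partial_\la\tilde\eta_2|\ge c\,n$ near $\nu_n$. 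Both bounds are uniform in $n$ and in $\tilde\Lambda$, and the near-constancy of the derivative on a small disk makes the estimate robust even if $\tilde\mu_n,\tilde\nu_n$ happen to be non-real.

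Then I would estimate each displacement by evaluating at the fixed (problem-$L$) zeros. Using $\vv_1(1/2,\mu_n)=0$ gives $\tilde\eta_1(\mu_n)=-\frac1{\mu_n}\int_0^{1/2}(K_1-\tilde K_1)\sin(\mu_n x)\,dx$; since $\tilde\eta_1(\tilde\mu_n)=0$ and $\partial_\la\tilde\eta_1$ is bounded below near $\mu_n$, this yields $|\mu_n-\tilde\mu_n|\le C|\tilde\eta_1(\mu_n)|$, hence $(n+1)|\mu_n-\tilde\mu_n|\le C\bigl|\int_0^{1/2}(K_1-\tilde K_1)\sin(\mu_n x)\,dx\bigr|$ after absorbing the factor $(n+1)/\mu_n$. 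Analogously, $\tilde\eta_2(\nu_n)=-\int_0^{1/2}(K_2-\tilde K_2)\cos(\nu_n x)\,dx$ and the growing derivative $\sim n$ absorbs the weight in place of the factor $1/\la$, giving $(n+1)|\nu_n-\tilde\nu_n|\le C\bigl|\int_0^{1/2}(K_2-\tilde K_2)\cos(\nu_n x)\,dx\bigr|$. Squaring and summing, the last ingredient is that $\{\sin(\mu_n x)\}_{n\ge0}$ and $\{\cos(\nu_n x)\}_{n\ge0}$ are Bessel sequences in $L_2(0,1/2)$: they are $\ell_2$–perturbations of the orthogonal bases $\{\sin((2n+1)\pi x)\}$ and $\{\cos(2n\pi x)\}$, hence Riesz bases, with Bessel bounds depending only on $L$. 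This gives $\sum_n\bigl|\int_0^{1/2}(K_1-\tilde K_1)\sin(\mu_n x)\,dx\bigr|^2\le C\|K_1-\tilde K_1\|_{L_2}^2$ and the same for the cosine sums, whence \eqref{summu} follows upon combining with $\|K_j-\tilde K_j\|_{L_2}\le C\rho(\Lambda,\tilde\Lambda)$.

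The main obstacle is establishing all the uniformity: the lower bounds on $|\partial_\la\tilde\eta_1|$, $|\partial_\la\tilde\eta_2|$ near the zeros and the Bessel constants must be uniform over the whole admissible family $\tilde\Lambda$ (equivalently, over $\tilde K_j$ ranging in a fixed $L_2$–ball), and one needs the zeros $\mu_n,\tilde\mu_n$ (resp.\ $\nu_n,\tilde\nu_n$) to be uniformly separated so that the derivative bound applies on a common small disk. All of this rests on the sine-type structure of the four functions together with a choice of $\eps$ small enough that the finitely many low-index zeros are correctly paired by Rouch\'e's theorem; once these uniform bounds are secured, the remaining steps are routine.
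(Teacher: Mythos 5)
Your argument is correct and essentially reproduces the paper's own proof: both pair the zeros by Rouch\'e's theorem on small contours about the $\mu_n$ (resp.\ $\nu_n$), bound each displacement by a uniform lower bound on the relevant derivative combined with the identity $\eta_1(\la)-\tilde\eta_1(\la)=\frac1\la\int_0^{1/2}\hat K_1(x)\sin(\la x)\,dx$ (and its cosine analogue), and sum using a Bessel inequality together with $\|\hat K_j\|_{L_2}\le C\rho(\Lambda,\tilde\Lambda)$ from Lemma~\ref{lem:stabR}. The only cosmetic difference is that you evaluate the difference function at the unperturbed zeros $\mu_n$ and invoke the Bessel property of the fixed sequence $\{\sin(\mu_n x)\}_{n\ge0}$, whereas the paper evaluates at $\tilde\mu_n$ and expands $\sin(\tilde\mu_n x)$ about the orthogonal system $\{\sin((2n+1)\pi x)\}_{n\ge0}$ before applying Bessel's inequality.
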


\begin{proof}
We will prove the lemma for the sequences $\{ \mu_n \}_{n \ge 0}$ and $\{ \tilde \mu_n \}_{n \ge 0}$. The proof for
$\{ \nu_n \}_{n \ge 0}$ and $\{ \tilde \nu_n \}_{n \ge 0}$ is analogous.

Let the eigenvalues $\{ \mu_n^2 \}_{n \ge 0}$ of the problem $L_0$ be numbered in the increasing order: $\mu_n^2 < \mu_{n+1}^2$,
$n \ge 0$. The following asymptotic relation holds:
\begin{equation} \label{asymptmu}
    \mu_n = \pi(2n + 1) + O\left( n^{-1} \right), \quad n \to \iy.
\end{equation}

For simplicity, assume that $\mu_n > 0$, $n \ge 0$. The general case requires minor technical modifications.
For brevity, denote $\eta_1(\la) = \vv_1(1/2, \la)$, $\dot{\eta}_1 = \frac{d}{d\la} \eta_1(\la)$.
Choose such positive constants $r$ and $c_0$, that $r < \mu_0$, $r < \mu_{n+1} - \mu_n$, $|\dot{\eta}_1(\la)| \ge c_0$
for $|\la - \mu_n| \le r$, $n \ge 0$. Define the contours $\ga_{n, r} := \{\la\colon |\la - \mu_n| = r \}$, $n \ge 0$.
Obviously, $|\eta_1(\la)| \ge c_r > 0$ on $\ga_{n, r}$, where the constant $c_r$ depends on $r$, but does not depend on $n$.
In view of the relations~\eqref{2.5} and~\eqref{eta1}, we have the estimate
$$
    |\eta_1(\la) - \tilde\eta_1(\la)| \le \frac{C}{n + 1} \| \hat K_1 \|_{L_2}, \quad \la \in \ga_{n, r}, \quad n \ge 0,
$$
where $\hat K_1 := K_1 - \tilde K_1$, and the constant $C$ does not depend on $n$.

By virtue of Lemma~\ref{lem:stabR}, for every fixed problem $L$ and $\eps_0 > 0$, one can choose $\eps > 0$, such that,
for $\tilde \Lambda \in \mathbb M$, $\rho(\Lambda, \tilde \Lambda) \le \eps$, the estimate $\| \hat K_1 \|_{L_2} \le \eps_0$ holds.
If $\eps_0$ is chosen sufficiently small, we get $\dfrac{|\eta_1(\la) - \tilde \eta_1(\la)|}{|\eta_1(\la)|} < 1$ on $\ga_{n, r}$,
$n \ge 0$. Rouch\'{e}`s theorem implies that there is exactly one zero of $\tilde \eta_1(\la)$ inside each contour $\ga_{n, r}$,
$n \ge 0$, and we denote this zero by $\tilde \mu_n$. By using Rouch\'{e}`s theorem, one can also show that the function
$\tilde \eta_1(\la)$ has no other zeros except $\{ \pm \tilde \mu_n \}_{n \ge 0}$.

The following Taylor expansion is valid for every $n \ge 0$:
$$
    \eta_1(\tilde \mu_n) = \eta_1(\mu_n) + \dot{\eta}_1(\theta_n) (\tilde \mu_n - \mu_n), \quad \theta_n \in \mbox{int}\,\ga_{n, r}.
$$
Using this expansion together with~\eqref{2.5} and~\eqref{eta1}, we obtain
$$
    \eta_1(\tilde \mu_n) - \tilde \eta_1(\tilde \mu_n) = \frac{1}{\tilde \mu_n} \int_0^{1/2} \hat K_1(x) \sin (\tilde \mu_n x) \, dx
     = \dot \eta_1(\theta_n) (\tilde \mu_n - \mu_n).
$$
Note that $|\eta_1(\theta_n)| \ge c_0$, since $\theta_n \in \mbox{int}\,\ga_{n, r}$. Furthermore, for $\tilde \mu_n$
the asymptotic formula similar to~\eqref{asymptmu} is valid. Consequently, $\sin \tilde \mu_n = \sin ((2n + 1) \pi x) + O\left(n^{-1}\right)$ as $n \to \iy$.
Summarizing the arguments above, we arrive at the estimate
\begin{equation} \label{difmu}
    |\mu_n - \tilde \mu_n| \le \frac{C}{n + 1} \hat k_n + \frac{C}{(n + 1)^2} \| \hat K_1 \|_{L_2},
\end{equation}
where $\{ \hat k_n \}_{n \ge 0}$ are the Fourier coefficients of $\hat K_1(x)$:
$$
    \hat k_n := \int_0^{1/2} \hat K_1(x) \sin ((2n + 1) \pi x) \, dx, \quad n \ge 0.
$$
Using~\eqref{difmu} and Bessel inequality for $\{ \hat k_n \}_{n \ge 0}$, we get
$$
   \left(  \sum_{n = 0}^{\iy} (n + 1)^2 |\mu_n - \tilde \mu_n|^2 \right)^{1/2} \le C \| \hat K_1 \|_{L_2}.
$$
In view of Lemma~\ref{lem:stabR}, the latter estimate yields the first inequlity in \eqref{summu}.
\end{proof}

Lemma~\ref{lem:zeros} together with
the relations~\eqref{eta1}, \eqref{eta2} imply that,
for sufficiently small $\eps > 0$ and $\rho(\Lambda, \tilde \Lambda) \le \eps$, the numbers
$\{ \tilde \mu_n^2 \}_{n \ge 0}$ and $\{ \tilde \nu_n^2 \}_{n \ge 0}$ are real and distinct, as well as
$\{ \mu_n^2 \}_{n \ge 0}$ and $\{ \nu_n^2 \}_{n \ge 0}.$
Therefore one can apply Borg's Theorem in the following form (see \cite[Theorem~1.8.1]{FY01}).

\begin{thm}[Borg] \label{thm:Borg}
For the boundary value problems $L_j$, $j = 0, 1$, of the form~\eqref{Lj}, there exists $\eps > 0$
(which depends on $L_j$) such that if real numbers $\{ \tilde \mu_n^2 \}_{n \ge 0}$ and $\{ \tilde \nu_n^2 \}_{n \ge 0}$
satisfy the condition
$$
    \Omega := \left( \sum_{n = 0}^{\iy} (n + 1)^2 (\mu_n - \tilde \mu_n)^2 \right)^{1/2} +
    \left( \sum_{n = 0}^{\iy} (n + 1)^2 (\nu_n - \tilde \nu_n)^2 \right)^{1/2} \le \eps,
$$
then there exists a unique real pair $\tilde q_1(x) \in L_2(0, 1/2)$ and $\tilde h_1$, for which the numbers
$\{ \tilde \mu_n^2 \}_{n \ge 0}$ and $\{ \tilde \nu_n^2 \}_{n \ge 0}$ are the eigenvalues of the problems $\tilde L_j$, $j = 0, 1$,
respectively. The problems $\tilde L_j$ have the form~\eqref{Lj}, but with $\tilde q_1$ instead of $q_1$ and $\tilde h_1$
instead of $h_1$. Moreover,
$$
    \| q_1 - \tilde q_1 \|_{L_2} \le C \Omega, \quad |h_1 - \tilde h_1| \le C \Omega,
$$
where the constant $C$ depends only on $L_j$, $j = 0, 1$.
\end{thm}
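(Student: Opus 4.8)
The plan is to reduce Borg's two-spectra problem to the recovery of a single Weyl function and then to apply the contour-integral method of spectral mappings (as in \cite[Chapter~1]{FY01}), carrying all constants quantitatively so that the Lipschitz estimates emerge at the end. Since $\{ \mu_n^2 \}$ are the zeros of $\eta_1(\la) = \vv_1(1/2, \la)$ and $\{ \nu_n^2 \}$ are the zeros of $\eta_2(\la) = \vv_1'(1/2, \la)$, with both functions entire of order $1/2$ and possessing the standard asymptotics, I would first write them, up to a common normalization, as Hadamard products over $\{ \mu_n^2 \}$ and $\{ \nu_n^2 \}$. Their ratio is the Weyl function $M(\la) = \eta_2(\la)/\eta_1(\la)$, whose poles are $\{ \mu_n^2 \}$ and whose residues are the reciprocals of the norming constants $\alpha_n := \int_0^{1/2} \vv_1(x, \mu_n)^2 \, dx$. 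The first technical step is to prove that the weighted closeness of the two spectra, measured by $\Omega$, controls the whole spectral data:
\[
    \left( \sum_{n = 0}^{\iy} (n + 1)^2 |\mu_n - \tilde \mu_n|^2 \right)^{1/2}
    + \left( \sum_{n = 0}^{\iy} |\alpha_n - \tilde \alpha_n|^2 \right)^{1/2} \le C\, \Omega .
\]
This would follow from the product representations by estimating logarithmic derivatives on fixed circles $\ga_{n,r}$ surrounding each $\mu_n$, exactly in the spirit of the proof of Lemma~\ref{lem:zeros}.

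With the spectral data $\{ \mu_n^2, \alpha_n \}$ at hand, I would set up the main equation of the inverse problem. Taking the reference problem $L_j$ as the model, one forms from the differences $\hat \mu_n = \mu_n - \tilde \mu_n$ and $\hat \alpha_n = \alpha_n - \tilde \alpha_n$ a linear integral operator and writes the relation between the fundamental solutions $\vv_1(x, \la)$ and $\tilde \vv_1(x, \la)$ of the two problems in contour-integral form. In an appropriate Banach space $B$ of bounded sequences (or the corresponding weighted $\ell_2$ space), this becomes an equation
\[
    (I + \tilde H(x)) \, \tilde \psi(x) = \psi(x), \quad x \in [0, 1/2],
\]
where $\psi(x)$ is built from the model solution and the bounded linear operator $\tilde H(x)$ is assembled from the spectral-data differences. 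The key estimate to establish here is $\| \tilde H(x) \|_{B \to B} \le C\, \Omega$, uniformly in $x \in [0, 1/2]$, which is a direct consequence of the preceding display.

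For $\Omega \le \eps$ with $\eps$ small enough, the operator $I + \tilde H(x)$ is then boundedly invertible by a Neumann series, uniformly in $x$; this yields the existence and uniqueness of $\tilde \psi$, hence of the pair $(\tilde q_1, \tilde h_1)$ whose problems $\tilde L_j$ have spectra $\{ \tilde \mu_n^2 \}$ and $\{ \tilde \nu_n^2 \}$. The potential and the coefficient are recovered from the standard reconstruction formulas of the method of spectral mappings,
\[
    \tilde q_1(x) - q_1(x) = -2 \frac{d}{dx} P_0(x, x), \qquad \tilde h_1 - h_1 = P_0(0, 0),
\]
where $P_0(x, t)$ is a series in the spectral-data differences that is controlled in norm by $\| \tilde H \| \le C\, \Omega$. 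Estimating these formulas gives $\| q_1 - \tilde q_1 \|_{L_2} \le C\, \Omega$ and $|h_1 - \tilde h_1| \le C\, \Omega$, which are the claimed stability bounds; the uniqueness for exact data $\Omega = 0$ is the classical Borg uniqueness theorem, recovered as the limiting case.

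The main obstacle I expect is the quantitative bookkeeping rather than any single conceptual difficulty. Specifically, one must show that differentiating the reconstruction series for $P_0(x, x)$ preserves $L_2$-control of the potential difference, and not merely uniform or $L_1$ control; this is precisely where the weight $(n + 1)^2$ in the definition of $\Omega$ is indispensable, since it compensates the one derivative lost in passing from the spectral data to $q_1$. A second point requiring care is the uniformity of both the constant $C$ and the threshold $\eps$ over a neighborhood of the reference problems $L_j$: every estimate on the model solutions $\vv_1(x, \la)$, $\vv_1'(x, \la)$ and on the products defining $\eta_1$, $\eta_2$ must be taken with constants depending only on $L_0$, $L_1$, so that the resulting $C$ and $\eps$ inherit this dependence.
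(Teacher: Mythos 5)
You should first know what you are being compared against: the paper does not prove this statement at all. It is invoked as a known result, cited as Theorem~1.8.1 of \cite{FY01}, and used as a black box in the proof of Theorem~\ref{thm:loc}. So your outline is not competing with an argument in the paper; what it reconstructs is, in essence, the standard proof from the cited source itself: pass from the two spectra of \eqref{Lj} to the spectral data $\{\mu_n^2, \al_n\}_{n \ge 0}$ with the quantitative bound
$$
\left( \sum_{n=0}^{\iy} (n+1)^2 (\mu_n - \tilde\mu_n)^2 \right)^{1/2} + \left( \sum_{n=0}^{\iy} (\al_n - \tilde\al_n)^2 \right)^{1/2} \le C\, \Omega,
$$
form the main equation $(I + \tilde H(x))\tilde\psi = \psi$ of the method of spectral mappings with $\| \tilde H(x) \| \le C\,\Omega$, invert by a Neumann series for $\Omega \le \eps$, and read the Lipschitz estimates off the reconstruction formulas. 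That is the correct architecture, and your remark about the weight $(n+1)^2$ is on target: since $\mu_n^2 - \tilde\mu_n^2 \sim 2\mu_n(\mu_n - \tilde\mu_n)$, the weighted closeness of the $\mu_n$ is exactly $\ell_2$-closeness of the eigenvalues themselves, which is the natural metric for $L_2$-potentials.

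Two points your sketch passes over would have to be written out in a complete proof. First, realness: the Neumann-series argument by itself produces a solution of the main equation, but for the resulting pair $(\tilde q_1, \tilde h_1)$ to be \emph{real}, the norming constants $\tilde\al_n$ reconstructed from the Hadamard products of the two perturbed spectra must be positive; via the residue identity for $\tilde\eta_2/\tilde\eta_1$ this is a sign condition on $\tilde\eta_2(\tilde\mu_n)\,\dot{\tilde\eta}_1(\tilde\mu_n)$, i.e.\ it amounts to the interlacing of $\{\tilde\mu_n^2\}$ and $\{\tilde\nu_n^2\}$. For small $\eps$ this follows from the interlacing of the spectra of $L_0$ and $L_1$ together with uniform gap bounds, but it is a necessary step, not a formality. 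Second, the closing verification: one must check that $\{\tilde\mu_n^2\}$ and $\{\tilde\nu_n^2\}$ are \emph{exactly} the spectra of $\tilde L_0$ and $\tilde L_1$, not merely contained in them --- the analogue of the final step in the paper's proof of Theorem~\ref{thm:loc}, where $\tilde\Delta(\tilde\la_n) = 0$ is verified, supplemented by a zero count of Rouch\'e type as in Lemma~\ref{lem:zeros}. Also, your phrase that the residues of $\eta_2/\eta_1$ are ``the reciprocals of the norming constants'' is true only up to the factors (such as $2\mu_n$) coming from the $\la^2$-convention for the spectral parameter; the bookkeeping there affects the asymptotics $\al_n \to 1/4$ and hence the $\ell_2$-estimate for $\hat\al_n$. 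With these additions your outline is a faithful reproduction of the known proof in \cite{FY01} rather than a new route, which is entirely appropriate, since the paper itself treats the theorem as quoted background.
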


\begin{proof}[Proof of Theorem~\ref{thm:loc}]
According to Lemma~\ref{lem:zeros} and Theorem~\ref{thm:Borg}, for every problem $L$ there exists $\eps > 0$, such that
for any $\tilde \Lambda \in \mathbb M$, satisfying the estimate $\rho(\Lambda, \tilde \Lambda) < \eps$,
there exist a real function $\tilde q_1(x) \in L_2(0, 1/2)$ and a real number $\tilde h_1$, such that $\tilde \eta_j(\la)$
is the characteristic function of $L_{j-1}$, $j = 1, 2$. Moreover, the estimates~\eqref{estq} are valid for these $\tilde q_1$
and $\tilde h_1$. In order to finish the proof of Theorem~\ref{thm:loc}, we have to show that the spectrum of the problem
$\tilde L = L(1/2, \tilde q_1, q_2, \tilde h_1, h_2, a_1, a_2)$ coincides with $\{ \tilde \la_n^2 \}_{n \ge 0}$.

Denote by $\tilde \vv(x, \la)$ the solution of the initial value problem
$$
    -\tilde \vv'' + \tilde q_1(x) \tilde\vv = \lambda^2 \tilde\vv, \quad \tilde \vv(0, \la) = 1, \quad \tilde \vv'(0, \la) = \tilde h_1.
$$
Then $\tilde \vv(1/2, \la) \equiv \tilde \eta_1(\la)$, $\tilde \vv'(1/2, \la) \equiv \tilde \eta_2(\la)$. Therefore characteristic
function of $\tilde L$ has the form
$$
    \tilde \Delta(\la) = a_1 \tilde \eta_1(\la) \vv_2'(1/2, \la) + a_1^{-1} \tilde \eta_2(\la) \vv_2(1/2, \la) +
    a_2 \tilde \eta_1(\la) \vv_2(1/2, \la).
$$
By construction, we have $(\tilde K, \tilde v_n)_H = \tilde f_n$, $n \ge 0$. In other words, the following relation holds
\begin{multline*}
 \left( \tilde \la_n \cos \frac{\tilde \la_n}{2} + \om_1 \sin \frac{\tilde \la_n}{2} +
\int_0^{1/2} \tilde K_1(x) \sin (\tilde \la_n x) \, dx \right) \cdot \frac{1}{\tilde \la_n} (a_1 \vv_2'(1/2, \tilde \la_n)
+ a_2 \vv_2(1/2, \tilde \la_n))
\\ + \left(-\tilde \la_n \sin \frac{\tilde \la_n}{2} + \om_1 \cos \frac{\tilde \la_n}{2} + \int_0^{1/2} \tilde K_2(x)
\cos (\tilde \la_n x) \, dx \right) \cdot a_1^{-1} \vv_2(1/2, \tilde \la_n) = 0, \quad n \ge 0.
\end{multline*}
In view of the definitions~\eqref{eta1},~\eqref{eta2}, we obtain $\tilde \Delta(\tilde \la_n) = 0$, $n \ge 0$. Therefore
$\{ \tilde \la_n^2 \}_{n \ge 0}$ are the eigenvalues of the constructed problem $\tilde L$.
\end{proof}

\bigskip

{\large \bf 4. The case $0 < d < 1/2$}

\bigskip

In this section, Inverse Problem~\ref{ip:2} is studied.
First of all, we formulate a uniqueness theorem. Let $I$ be a fixed subset of $\mathbb N \cup \{ 0 \}$.
Suppose that $\{ \la_n^2 \}_{n \ge 0}$ and $\{ \tilde \la_n^2 \}_{n \ge 0}$ are the eigenvalues of the boundary value problems
$L = L(d, q_1, q_2, h_1, h_2, a_1, a_2)$ and $\tilde L = L(d, \tilde q_1, q_2, \tilde h_1, h_2, a_1, a_2)$, respectively.

\begin{thm} \label{thm:uniq}
Suppose that $\la_n = \tilde \la_n$, $n \in I$, $\om_1 = \tilde \om_1$, and the system $\{ \exp(\pm i \la_n x) \}_{n \in I}$ is complete in
$L_2(-2d, 2d)$. Then $q_1 = \tilde q_1$ in $L_2(0, d)$ and $h_1 = \tilde h_1$.
\end{thm}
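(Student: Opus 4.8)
The plan is to reduce the uniqueness question to a statement about the characteristic functions and the main equation \eqref{scal}, and then exploit the completeness hypothesis on $\{ \exp(\pm i \la_n x) \}_{n \in I}$ to force the two potentials to coincide. First I would introduce the vector-function $K = \begin{pmatrix} K_1 \\ K_2 \end{pmatrix}$ built from $\vv_1$ via \eqref{2.5}, \eqref{2.6}, \eqref{intpsi}, and the analogous $\tilde K$ built from $\tilde \vv_1$; since the data $q_2$, $h_2$, $a_1$, $a_2$ are common to both problems, the vector-functions $v_n$ defined by \eqref{defv} and the numbers $f_n$ from \eqref{deff} depend only on shared data (together with $\om_1 = \tilde \om_1$, which is given). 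Hence for $n \in I$ we have $\tilde \la_n = \la_n$, so $\tilde v_n = v_n$ and $\tilde f_n = f_n$. The main equation \eqref{scal} then yields $(K - \tilde K, v_n)_H = 0$ for every $n \in I$.

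The key step is to translate this orthogonality into a statement amenable to the completeness hypothesis. Setting $\hat K := K - \tilde K$ with components $\hat K_1 \in L_2(0,d)$ (odd extension, paired with $\sin$) and $\hat K_2 \in L_2(0,d)$ (even, paired with $\cos$), the relation $(\hat K, v_n)_H = 0$ reads, after inserting \eqref{defv},
\begin{equation*}
    \frac{1}{\la_n}(a_1 \vv_2'(1-d, \la_n) + a_2 \vv_2(1-d, \la_n)) \int_0^d \hat K_1(x) \sin(\la_n x)\,dx
    + a_1^{-1} \vv_2(1-d, \la_n) \int_0^d \hat K_2(x) \cos(\la_n x)\,dx = 0.
\end{equation*}
Following the argument in Lemma~\ref{lem:complete}, I would introduce the entire function $W(\la)$ defined exactly as in \eqref{defH} but with $\hat K_1, \hat K_2$ in place of $w_1, w_2$, so that $W(\la_n) = 0$ for $n \in I$, and $W$ is even, giving zeros at $\{ \pm \la_n \}_{n \in I}$. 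The plan is to show $W \equiv 0$. The natural route is to combine the exponential-type estimate $|W(\la)| \le C \exp(2d\,|\mathrm{Im}\,\la|)$ (the integrals run over $[0,d]$ and $\vv_2$ contributes type $1-d$, but the relevant product structure keeps the type controlled) with the completeness of $\{ \exp(\pm i \la_n x) \}_{n \in I}$ in $L_2(-2d, 2d)$.

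The main obstacle is precisely extracting $\hat K_1 = \hat K_2 = 0$ from the vanishing of $W$, rather than merely $W \equiv 0$. I would expand $W(\la)$ as a single integral of the form $\int_{-2d}^{2d} G(t) e^{i\la t}\,dt$ for an appropriate $G \in L_2(-2d, 2d)$ assembled from $\hat K_1$, $\hat K_2$ and the known kernel of $\vv_2(1-d, \la)$ (using the transformation-operator representations of $\vv_2, \vv_2'$ to write the products as Fourier-type integrals over $[-2d, 2d]$). Vanishing of $W$ at all $\la_n$, $n \in I$, together with completeness of $\{ \exp(\pm i\la_n x)\}_{n\in I}$ in $L_2(-2d, 2d)$, forces $G \equiv 0$. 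The final step is to deconvolve $G \equiv 0$ back to $\hat K_1 = \hat K_2 = 0$: because the kernel coming from $\vv_2(1-d, \la)$ is a Volterra-type perturbation of the identity, this system is invertible, yielding $\hat K_j = 0$ in $L_2(0,d)$. Then $\tilde \vv_1(d, \la) \equiv \vv_1(d, \la)$ and $\tilde \vv_1'(d, \la) \equiv \vv_1'(d, \la)$, so the Weyl function $\vv_1'(d,\la)/\vv_1(d,\la)$ coincides for both problems, and the classical uniqueness theorem for the Sturm-Liouville problem on $(0,d)$ (see \cite{FY01}) gives $q_1 = \tilde q_1$ and $h_1 = \tilde h_1$. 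The delicate point deserving care is confirming that the exponential type of $W$ does not exceed $2d$, so that $L_2(-2d, 2d)$ is exactly the right space for the completeness hypothesis to apply.
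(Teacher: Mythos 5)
Your opening reduction is exactly right and matches the paper: since $q_2$, $h_2$, $a_1$, $a_2$ and $\om_1=\tilde\om_1$ are shared and $\la_n=\tilde\la_n$ on $I$, the data $v_n$, $f_n$ coincide for both problems, and the main equation \eqref{scal} gives $(K-\tilde K, v_n)_H=0$ for $n\in I$; the endgame via the Weyl function is also the paper's. But there is a genuine gap at the central analytic step. The entire function $W(\la)$ you form --- \eqref{defH} with $\hat K_1,\hat K_2$ in place of $w_1,w_2$, i.e.\ with the weights $a_1\vv_2'(1-d,\la)+a_2\vv_2(1-d,\la)$ and $a_1^{-1}\vv_2(1-d,\la)$ --- has exponential type $d+(1-d)=1$, not $2d$: the factor $\vv_2(1-d,\la)$ has type $1-d$, the integrals over $[0,d]$ contribute type $d$, and nothing in the product structure cancels this. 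Concretely, the leading term contains products such as $\cos(\la(1-d))\cos(\la x)$ with $x\in[0,d]$, whose frequencies $1-d\pm x$ reach up to $1>2d$ whenever $d<1/2$. For the same reason your proposed repair --- rewriting $W(\la)=\int_{-2d}^{2d}G(t)e^{i\la t}\,dt$ via transformation-operator kernels for $\vv_2,\vv_2'$ --- is impossible: Paley--Wiener places $G$ only in $L_2(-1,1)$, and completeness of $\{\exp(\pm i\la_n x)\}_{n\in I}$ in the strictly smaller space $L_2(-2d,2d)$ cannot force such a $G$ to vanish. (Note that for $d=1/2$ your type count would be harmless, which is exactly why Lemma~\ref{lem:complete} can argue directly with the $\vv_2$-weights via Phragm\'en--Lindel\"of; for $d<1/2$ that route is closed.)

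The missing idea, which is the whole point of the paper's Lemma~\ref{lem:compI}, is to use the characteristic equation \emph{before} forming the entire function: since $\Delta(\la_n)=0$, relation \eqref{char} shows that at $\la=\la_n$ the pair of weights $\bigl(a_1\vv_2'(1-d,\la_n)+a_2\vv_2(1-d,\la_n),\ a_1^{-1}\vv_2(1-d,\la_n)\bigr)$ is proportional to $\bigl(-\vv_1'(d,\la_n),\ \vv_1(d,\la_n)\bigr)$ with a common nonvanishing factor. Hence the orthogonality relations \eqref{sm1} are equivalent to $\int_0^d\bigl(\hat K_1(x)\,\vv_1'(d,\la_n)\frac{\sin(\la_n x)}{\la_n}-\hat K_2(x)\,\vv_1(d,\la_n)\cos(\la_n x)\bigr)dx=0$ for $n\in I$, and the entire function built from \emph{these} weights involves only $\vv_1(d,\la)$, $\vv_1'(d,\la)$, each of type $d$; the resulting $W$ lies in $\mathcal L^{2d}$, so Paley--Wiener together with the completeness hypothesis gives $W\equiv 0$. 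After this swap, your final deconvolution argument is sound: the frequencies are now $d\pm x\in[0,2d]$ and the leading part is a Volterra-type (triangular) perturbation of the identity, yielding $\hat K_1=\hat K_2=0$, hence $\vv_1(d,\la)\equiv\tilde\vv_1(d,\la)$, $\vv_1'(d,\la)\equiv\tilde\vv_1'(d,\la)$, and uniqueness from the Weyl function. Without the substitution of $\vv_1$-weights for $\vv_2$-weights at the eigenvalues, the completeness hypothesis in $L_2(-2d,2d)$ never connects to your $W$, and the proof does not close.
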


Theorem~\ref{thm:uniq} is an immediate corollary of the main equation~\eqref{scal} for $n \in I$
and the following Lemma.

\begin{lem} \label{lem:compI}
Suppose that the system $\{ \exp(\pm i \la_n x) \}_{n \in I}$ is complete in $L_2(-2d, 2d)$.
Then the system $\{ v_n \}_{n \in I}$, constructed by~\eqref{defv}, is complete in $H$.
\end{lem}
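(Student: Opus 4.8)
The plan is to argue by contradiction: from a nontrivial element annihilating $\{v_n\}_{n\in I}$ I will build an entire function of exponential type $2d$ vanishing at all $\pm\la_n$, $n\in I$, and then invoke the completeness of $\{\exp(\pm i\la_n x)\}_{n\in I}$ in $L_2(-2d,2d)$.

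So suppose $\{v_n\}_{n\in I}$ is not complete and pick $w=\begin{pmatrix}w_1\\ w_2\end{pmatrix}\in H$, $w\ne0$, with $(w,v_n)_H=0$ for all $n\in I$. Set
$$A(\la):=\int_0^d w_1(x)\sin(\la x)\,dx,\qquad B(\la):=\int_0^d w_2(x)\cos(\la x)\,dx,$$
so that $A$ is odd, $B$ is even, both are entire of exponential type $d$ and lie in $L_2(\mathbb R)$, and $A(\la)/\la$ is entire and even. By \eqref{defv} the annihilation conditions read
\begin{equation}
\frac{A(\la_n)}{\la_n}\bigl(a_1\vv_2'(1-d,\la_n)+a_2\vv_2(1-d,\la_n)\bigr)+a_1^{-1}\vv_2(1-d,\la_n)\,B(\la_n)=0,\qquad n\in I. \tag{$*$}
\end{equation}

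The crucial step is to introduce
$$N(\la):=\vv_1(d,\la)\,B(\la)-\vv_1'(d,\la)\,\frac{A(\la)}{\la}.$$
Since $\vv_1(d,\la)$ and $\vv_1'(d,\la)$ are even and of exponential type $d$ (bounded, resp. $O(\la)$, on the real axis), while $B$ and $A/\la$ are even of type $d$ and lie in $L_2(\mathbb R)$, the function $N$ is even, entire, of exponential type $2d$, and $N\in L_2(\mathbb R)$. I claim $N(\la_n)=0$ for every $n\in I$. Indeed, by \eqref{char} the eigenvalue equation $\Delta(\la_n)=0$ can be rewritten as
$$\vv_1(d,\la_n)\bigl(a_1\vv_2'(1-d,\la_n)+a_2\vv_2(1-d,\la_n)\bigr)+a_1^{-1}\vv_1'(d,\la_n)\,\vv_2(1-d,\la_n)=0,$$
and together with $(*)$ this gives two linear relations for one and the same pair $\bigl(a_1\vv_2'(1-d,\la_n)+a_2\vv_2(1-d,\la_n),\ \vv_2(1-d,\la_n)\bigr)$. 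This pair is nonzero---otherwise $\vv_2(1-d,\la_n)=\vv_2'(1-d,\la_n)=0$ would force $\vv_2(\cdot,\la_n)\equiv0$---so the determinant of the two relations must vanish; a direct computation identifies this determinant with $a_1^{-1}N(\la_n)$, whence $N(\la_n)=0$.

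It remains to deduce $w=0$. As $N$ is entire of exponential type $\le 2d$ and belongs to $L_2(\mathbb R)$, the Paley--Wiener theorem gives $N(\la)=\int_{-2d}^{2d}\phi(t)e^{i\la t}\,dt$ with $\phi\in L_2(-2d,2d)$; the evenness of $N$ together with $N(\la_n)=0$ yields $N(\pm\la_n)=0$, i.e. $\phi\perp\exp(\pm i\la_n\cdot)$ in $L_2(-2d,2d)$ for all $n\in I$. By the assumed completeness $\phi=0$, hence $N\equiv0$. Finally, at each zero $\mu$ of $\vv_1(d,\cdot)$ one has $\vv_1'(d,\mu)\ne0$, so $N\equiv0$ forces $A(\mu)=0$; since $A$ is of type $d$ and the zeros of $\vv_1(d,\cdot)$ carry a complete exponential system in $L_2(-d,d)$, we get $A\equiv0$, thus $w_1=0$, and then $N\equiv0$ gives $B\equiv0$, thus $w_2=0$---contradicting $w\ne0$. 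I expect the decisive point to be the construction of $N$ and the verification that $N(\la_n)=0$: the gain is that $N$ is assembled only from the left Cauchy data $\vv_1(d,\cdot),\vv_1'(d,\cdot)$ and the transforms $A,B$, so its exponential type is exactly $2d$---the type matched to the interval $(-2d,2d)$---which is precisely what lets the fractional-spectrum completeness hypothesis close the argument.
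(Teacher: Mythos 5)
Your proof is correct and takes essentially the same route as the paper: your function $N(\la)$ is, up to sign, exactly the paper's $W(\la)$, obtained by using $\Delta(\la_n) = 0$ to eliminate the right-hand data $\vv_2(1-d,\la_n)$, $\vv_2'(1-d,\la_n)$ from the annihilation relations, with the same Paley--Wiener/completeness argument closing the proof. Your determinant argument for $N(\la_n)=0$ and your explicit deduction of $w_1 = w_2 = 0$ from $N \equiv 0$ simply supply details for two steps the paper states without elaboration (the claimed ``equivalence'' of \eqref{sm1} with the reduced relation, and the final ``one can easily show'').
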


\begin{proof}
Let $w_1$ and $w_2$ be functions from $L_2(0, d)$, not both equal zero and satisfying the relation
\begin{multline} \label{sm1}
    \int_0^d \biggl(w_1(x) \frac{1}{\la_n} (a_1 \vv_2'(1-d,\la_n) + a_2 \vv_2(1-d,\la_n)) \sin (\la_n x) \\ + w_2(x) a_1^{-1} \vv_2(1-d,\la_n)
    \cos (\la_n x) \biggr) \, dx = 0, \quad n \in I. 	
\end{multline}
In view of~\eqref{char} and the equality $\Delta(\la_n) = 0$, the condition~\eqref{sm1} is equivalent to the following one:
$$
     \int_0^d \left( w_1(x) \vv_1'(d, \la_n) \frac{\sin (\la_n x)}{\la_n} - w_2(x) \vv_1(d, \la_n) \cos (\la_n x)\right) \, dx = 0,
     \quad n \in I.
$$
Hence the entire function
$$
    W(\la) := \int_0^d \left( w_1(x) \vv_1'(d, \la) \frac{\sin (\la x)}{\la} - w_2(x) \vv_1(d, \la) \cos (\la x)\right) \, dx
$$
has the zeros $\{ \pm \la_n \}_{n \in I}$. Clearly, $W \in \mathcal L^{2d}$.
Since the system $\{ \exp(\pm i \la_n x) \}_{n \in I}$ is complete in $L_2(-2d, 2d)$, Paley-Wiener Theorem yields $W(\la) \equiv 0$.
Consequently, one can easily show that $w_1 = w_2 = 0$ in $L_2(0, d)$, so the system $\{ v_n \}_{n \in I}$ is complete in $H$.
\end{proof}

Now suppose that $\{ v_n \}_{n \in I}$ is a Riesz basis in $H$. Obviously, in this case, the solution of Inverse Problem~2 is unique.
Moveover, one can construct this solution, by using the following algorithm.

\begin{alg} \label{alg:2}
Suppose that $0 < d < 1/2$. Let $\{ \la_n^2 \}_{n \ge 0}$, $q_2$, $h_2$, $a_1$, $a_2$ and $\om_1$ be given. We have to construct
$q_1$ and $h_1$.
\begin{enumerate}
\item Construct the functions $\vv_2(1-d, \la)$ and $\vv_2'(1-d, \la)$, using $q_2$ and $h_2$.
\item Find the vector-functions $\{ v_n(x) \}_{n \in I}$ and the numbers $\{ f_n \}_{n \in I}$ by~\eqref{defv} and~\eqref{deff},
respectively.
\item Determine the vector-function $K(x)$, using its coordinates with respect to the Riesz basis (see \eqref{scal}):
$$
    K(x) = \sum_{n \in I} f_n v_n^*(x),
$$
where $\{ v_n^* \}_{n \in I}$ is the Riesz basis, biorthonormal to $\{ v_n \}_{n \in I}$.
\item Similarly to the steps~7-8 of Algorithm~\ref{alg:1}, use $K(x)$ to construct $q_1$ and $h_1$.
\end{enumerate}
\end{alg}

Using Algorithm~\ref{alg:2}, the following theorem on local solvability and stability of Inverse Problem~\ref{ip:2} can be proved.

\begin{thm} \label{thm:locI}
Let $L = L(d, q_1, q_2, h_1, h_2, a_1, a_2)$ be any fixed problem, and let the subsequence of its eigenvalues $\{ \la_n^2 \}_{n \ge 0}$
be such that the system of vector-functions $\{ v_n \}_{n \in I}$, constructed by~\eqref{defv}, be a Riesz basis in $H$.
Then there exists $\eps > 0$, such that for any sequence
$\tilde \Lambda_I = \{ \tilde \la_n \}_{n \in I}$, satisfying the estimate
$$
    \rho_I := \left( \sum_{n \in I} \la_n^2 (\la_n - \tilde \la_n)^2 \right)^{1/2} \le \eps,
$$
there exist a function $\tilde q_1 \in L_2(0, d)$ and a number $\tilde h_1 \in \mathbb R$, such that $\{\tilde \la_n^2 \}_{n \in I}$
are eigenvalues of the problem $\tilde L = L(d, \tilde q_1, q_2, \tilde h_1, h_2, a_1, a_2)$,
and
\begin{equation*}
   \| q_1 - \tilde q_1 \|_{L_2} \le C \rho_I, \quad |h_1 - \tilde h_1| \le C \rho_I,
\end{equation*}
where the constant $C > 0$ depends only on the problem $L$ and does not depend on $\tilde \Lambda_I$.
\end{thm}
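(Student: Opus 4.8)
The plan is to transcribe the proof of Theorem~\ref{thm:loc} from Section~3, replacing the interval $(0,1/2)$ by $(0,d)$, the full index set $\mathbb N\cup\{0\}$ by $I$, and the weight $(n+1)^2$ by $\la_n^2$. The hypothesis that $\{v_n\}_{n\in I}$ is a Riesz basis now plays exactly the role that Lemmas~\ref{lem:complete} and~\ref{lem:asymptv} played for the full system: it is the single ingredient we assume rather than derive, and every later step rests on it. Since $q_2$, $h_2$, $a_1$, $a_2$ and $\om_1$ all belong to the given data of Inverse Problem~\ref{ip:2} and are shared by $L$ and $\tilde L$, the functions $\vv_2(1-d,\la)$ and $\vv_2'(1-d,\la)$ are unchanged under the perturbation, so the only variation comes from the subsequence $\{\tilde\la_n\}_{n\in I}$.

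First I would apply Algorithm~\ref{alg:2} to $\{\tilde\la_n^2\}_{n\in I}$, forming $\tilde v_n$ and $\tilde f_n$, $n\in I$, by the obvious analogues of \eqref{defv} and~\eqref{deff} with $\tilde\la_n$ in place of $\la_n$. Schwartz's Lemma, used as in the first Lemma of Section~3, gives $|\sin(\la_n x)-\sin(\tilde\la_n x)|\le C|\la_n-\tilde\la_n|$, $|\vv_2(1-d,\la_n)-\vv_2(1-d,\tilde\la_n)|\le C|\la_n-\tilde\la_n|$, and—because $\vv_2'(1-d,\la)\sim-\la\sin(\la(1-d))$ carries an extra factor $\la$—the bound $|\vv_2'(1-d,\la_n)-\vv_2'(1-d,\tilde\la_n)|\le C\la_n|\la_n-\tilde\la_n|$. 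These combine to $\|v_n-\tilde v_n\|_H\le C\la_n|\la_n-\tilde\la_n|$, and summing over $n\in I$ yields $\sum_{n\in I}\|v_n-\tilde v_n\|_H^2\le C\rho_I^2$ together with $\sum_{n\in I}(f_n-\tilde f_n)^2\le C\rho_I^2$. This is precisely where the weight $\la_n^2$ in the definition of $\rho_I$ is forced.

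Next, repeating Lemma~\ref{lem:stabR} verbatim, the quadratic closeness above shows that for sufficiently small $\eps$ the system $\{\tilde v_n\}_{n\in I}$ is still a Riesz basis, so there is a unique $\tilde K\in H$ with $(\tilde K,\tilde v_n)_H=\tilde f_n$ for $n\in I$, and \cite[Lemma~5]{Bond18} gives $\|K-\tilde K\|_H\le C\rho_I$. I then build the real entire functions $\tilde\eta_1,\tilde\eta_2$ from the components of $\tilde K$ by the analogues of \eqref{eta1} and~\eqref{eta2} with $d$ in place of $1/2$, and their zeros furnish the two candidate spectra $\{\tilde\mu_n^2\}_{n\ge0}$, $\{\tilde\nu_n^2\}_{n\ge0}$ of problems of the form~\eqref{Lj} on $(0,d)$. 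The Rouch\'e-plus-Taylor argument of Lemma~\ref{lem:zeros}, run on the fixed disks $\gamma_{n,r}$, shows these zeros are real, simple, and admit an enumeration with $\sum_{n\ge0}(n+1)^2|\mu_n-\tilde\mu_n|^2\le C\|K-\tilde K\|_H^2$ and likewise for $\nu_n$; reality holds because each $\tilde\eta_j$ is even with real Taylor coefficients, so a non-real zero would force its conjugate into the same disk, contradicting the single zero produced by Rouch\'e. Applying Borg's Theorem on $(0,d)$ (the analogue of Theorem~\ref{thm:Borg}) then yields a unique real pair $(\tilde q_1,\tilde h_1)$ with these spectra and the bounds $\|q_1-\tilde q_1\|_{L_2}\le C\rho_I$, $|h_1-\tilde h_1|\le C\rho_I$. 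Finally, exactly as in Theorem~\ref{thm:loc}, I verify that $\tilde L$ has $\{\tilde\la_n^2\}_{n\in I}$ among its eigenvalues: since $\tilde\vv(d,\la)\equiv\tilde\eta_1(\la)$ and $\tilde\vv'(d,\la)\equiv\tilde\eta_2(\la)$, the identity $(\tilde K,\tilde v_n)_H=\tilde f_n$ rewrites through \eqref{eta1}, \eqref{eta2} and~\eqref{char} as $\tilde\Delta(\tilde\la_n)=0$ for every $n\in I$.

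The main obstacle is not any single step—each is a transcription of Section~3—but the need to confirm that the fractional data genuinely suffice and that two different index regimes coexist correctly. The crux is that although only the subfamily $\{f_n\}_{n\in I}$ is available, the Riesz-basis hypothesis recovers the \emph{entire} vector-function $K$, and $K$ in turn determines the \emph{full} pair of spectra $\{\mu_n^2\}_{n\ge0}$, $\{\nu_n^2\}_{n\ge0}$ on $(0,d)$; it is these complete spectra, not the sparse set indexed by $I$, that feed Borg's Theorem. Thus the delicate bookkeeping is that the perturbation is controlled only along $I$ through $\rho_I$, yet the zero-counting and Borg steps range over all $n\ge0$, and one must check that the single estimate $\|K-\tilde K\|_H\le C\rho_I$ is strong enough to propagate to the weighted $l_2$ zero bounds over the full index set.
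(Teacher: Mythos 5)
Your proposal is correct and takes essentially the same route the paper intends: the paper gives no separate written proof of Theorem~\ref{thm:locI}, asserting only that it follows from Algorithm~\ref{alg:2} by the scheme of Section~3, and your transcription (quadratic-closeness estimates with the weight $\la_n^2$ in place of $(n+1)^2$, Riesz-basis stability of $\{\tilde v_n\}_{n\in I}$ with \cite[Lemma~5]{Bond18} giving $\|K-\tilde K\|_H\le C\rho_I$, the Rouch\'e-plus-Taylor zero estimates, Borg's theorem on $(0,d)$, and the final verification $\tilde\Delta(\tilde\la_n)=0$ for $n\in I$, which yields exactly the containment of $\{\tilde\la_n^2\}_{n\in I}$ in the spectrum that the theorem claims) is precisely that scheme. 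Your closing observation -- that the data perturbed only along $I$ nevertheless recover the full vector-function $K$, whose components determine the complete auxiliary spectra $\{\mu_n^2\}_{n\ge0}$, $\{\nu_n^2\}_{n\ge0}$ fed to Borg's theorem -- correctly identifies the one genuinely new point of the fractional-spectrum case.
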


Theorem~\ref{thm:locI} shows that the subspectrum $\{ \la_n^2 \}_{n \in I}$ is minimal data, uniquely specifying
$q_1(x)$ and $h_1$, if $\{ v_n \}_{n \in I}$ is a Riesz basis. Indeed, Theorem~\ref{thm:locI} claims existence of the
inverse problem solution under any small perturbation of $\{ \la_n^2 \}_{n \in I}$.
In the case when Inverse Problem~\ref{ip:2}
is overdetermined, a small perturbation of the subspectrum can lead to absence of solution.

\begin{example}
Consider $d = \frac{1}{4}$. In this case, the eigenvalues of the problem $L$ fulfill the asymptotic formula
$$
    \la_n = \la_n^0 + O\left( \frac{1}{\la_n^0}\right), \quad n \ge 0, \: n \to \iy,
$$
where $\{ \pm \la_n^0 \}_{n \ge 0}$ are the zeros of the function
\begin{gather*}
    \Delta_0(\la) = \frac{\la}{2} \left((a_1 + a_1^{-1}) \sin \la + (a_1 - a_1^{-1}) \sin \frac{\la}{2}\right), \\
    \la_n^0 \ge 0, \quad \la_n^0 < \la_{n+1}^0, \quad n \ge 0.
\end{gather*}
In particular, $\la_{2n}^0 = 2 \pi n$, $n \ge 0$.

Set $I := \{ 2 n \colon n \in \mathbb N \cup \{ 0 \} \}$. Similarly to the proof of Theorem~\ref{thm:Riesz}, one can show,
that the system $\{ v_n \}_{n \in I}$, constructed by~\eqref{defv}, is a Riesz basis in $H = L_2(0, 1/4) \oplus L_2(0, 1/4)$.
Indeed, the completeness of $\{ v_n \}_{n \in I}$ can be proved similarly to Lemmas~\ref{lem:complete} and~\ref{lem:compI}.
Moreover, the following asymptotic relation holds:
\begin{gather}
    v_{2n}(x) = v_{2n}^0(x) + O\left( n^{-1} \right), \quad n \to \iy, \\
    v_{4k}^0(x) = (-1)^k a_1^{-1} \begin{pmatrix}
                      0 \\ \cos (4 k \pi x)
                  \end{pmatrix}, \\
    v_{4k + 2}^0(x) = (-1)^k a_1 \begin{pmatrix}
                       \sin ((4 k + 2) \pi x) \\
                       0
                   \end{pmatrix}, \quad k \ge 0.
\end{gather}
Clearly, $\{ v_{2n}^0 \}_{n \ge 0}$ is a Riesz basis in $H$, so the same is true for $\{ v_{2n}(x) \}_{n \ge 0}$.
Consequently, the eigenvalues $\{ \la_{2n} \}_{n \ge 0}$ uniquely specify the solution of Inverse Problem~\ref{ip:2},
which can be constructed by Algorithm~\ref{alg:2}. Theorem~\ref{thm:locI} is applicable to this case, so local solvability and
stability of solution hold.
\end{example}

\medskip

{\bf Acknowledgment.} The author C.F.~Yang was supported in part by the National Natural Science Foundation of China
(11871031 and 11611530682). The author N.P.~Bondarenko was supported by Grant 1.1660.2017/4.6 of the Russian
Ministry of Education and Science and by Grant 19-01-00102 of the Russian Foundation for Basic Research.

\medskip

\medskip

\noindent Chuan-Fu Yang \\
Department of Applied Mathematics, Nanjing University of Science and Technology, \\
Nanjing, 210094, Jiangsu, China, \\
email: {\it chuanfuyang@njust.edu.cn}

\medskip

\noindent Natalia Pavlovna Bondarenko \\
1. Department of Applied Mathematics and Physics, Samara National Research University, \\
Moskovskoye Shosse 34, Samara 443086, Russia, \\
2. Department of Mechanics and Mathematics, Saratov State University, \\
Astrakhanskaya 83, Saratov 410012, Russia, \\
e-mail: {\it BondarenkoNP@info.sgu.ru}


\begin{thebibliography}{99}

\bibitem{FY01}
Freiling, G.; Yurko, V. Inverse Sturm-Liouville Problems and Their Applications. Huntington,
NY: Nova Science Publishers, 2001.

\bibitem{And97}
Anderssen, R.S. The effect of discontinuities in density and shear velocity on the
asymptotic overtone structure of tortional eigenfrequencies of the Earth, Geophys.
J.R. Astr. Soc. 50 (1997), 303--309.

\bibitem{LU81}
Lapwood, F.R.; Usami, T. Free Oscillations of the Earth, Cambridge University
Press, Cambridge, 1981.

\bibitem{LS64}
Litvinenko, O.N.; Soshnikov, V.I. The Theory of Heterogenious Lines and their
Applications in Radio Engineering, Moscow: Radio, 1964 (Russian).

\bibitem{MF80}
Meschanov, V.P.; Feldstein, A.L. Automatic Design of Directional Couplers, Moscow:
Sviaz, 1980 (Russian).

\bibitem{Mar77}
Marchenko, V. A. Sturm-Liouville Operators and their Applications, Naukova Dumka,
Kiev (1977) (Russian); English transl., Birkhauser (1986).

\bibitem{Lev84}
Levitan, B. M. Inverse Sturm-Liouville Problems, Nauka, Moscow (1984) (Russian); English
transl., VNU Sci. Press, Utrecht (1987).

\bibitem{PT87}
P\"{o}schel, J; Trubowitz, E. Inverse Spectral Theory, New York, Academic Press (1987).

\bibitem{Hald84}
Hald O. Discontinuous inverse eigenvalue problem, Commun. Pure Appl. Math. (1984), Vol. 37, 53--577.

\bibitem{SY08}
Shieh, C.-T.; Yurko, V.A. Inverse nodal and inverse spectral problems for discontinuous boundary value problems,
J. Math. Anal. Appl. 347 (2008), 266--272.

\bibitem{OK12}
Ozkan A.S., Keskin B. Uniqueness theorems for an impulsive Sturm-Liouville boundary value problem, Appl. Math. J. Chinese Univ.
27:4 (2012), 428--434.

\bibitem{Yang14-1}
Yang, C.-F. Inverse problems for the Sturm-Liouville operator with discontinuity, Inverse Problems in Science
and Engineering 22: 2 (2014), 232--244.

\bibitem{Wang15}
Wang, Y.P. Inverse problems for discontinuous Sturm-Liouville operators with mixed spectral data,
Inverse Problems in Science and Engineering 23:7 (2015), 1180--1198.

\bibitem{AKM96}
Aktosun, T.; Klaus, M.; van der Mee, C. Recovery of discontinuities in a nonhomogeneous
medium, Inverse Problems, 12 (1996), 1-25.

\bibitem{Shep94}
Shepelsky D.G., The inverse problem of reconstruction of the medium‘s conductivity in
a class of discontinuous and increasing functions, Spectral operator theory and related
topics, 209-232, Advances in Soviet Math., 19, Amer. Math. Soc., Providence, RI, 1994.

\bibitem{HL78}
Hochstadt, H.; Lieberman, B. An inverse Sturm-Liouville problem with mixed given data, SIAM J. Appl. Math. 34: 4 (1978), 676--680.

\bibitem{GS00}
Gesztesy, F.; Simon, B. Inverse spectral analysis with partial information on the potential, II. The case of discrete spectrum,
Trans. AMS 352:6 (2000), 2765--2787.

\bibitem{Hor01}
Horvath, M. On the inverse spectral theory of Schr\"odinger and Dirac operators, Trans. AMS 353: 10 (2001), 
4155--4171.
	
\bibitem{Sakh01}
Sakhnovich, L. Half-inverse problems on the finite interval, Inverse Problems 17 (2001), 527--532.

\bibitem{HM04}
Hryniv, R.O.; Mykytyuk, Ya. V.  Half-inverse spectral problems for Sturm-Liouville
operators with singular potentials, Inverse Problems 20 (2004), 1423--1444.

\bibitem{Piv12}
Pivovarchik, V. On the Hald-Gesztesy-Simon theorem, Integral Equations
and Operator Theory 73 (2012), 383--393.

\bibitem{SBI11}
Shieh, C.-T.; Buterin, S.A.; Ignatiev, M. On Hochstadt-Lieberman theorem for Sturm-Liouville operators.
Far East Journal of Applied Mathematics 52 (2011), 131--146.

\bibitem{But11}
Buterin, S.A. On half inverse problem for differential pencils with the spectral parameter in boundary conditions,
Tamkang J. Math. 42 (2011), 355--364.

\bibitem{BY19}
Yang, C.-F.; Bondarenko, N.P. Reconstruction and solvability for discontinuous Hochstadt-Lieberman problems,
J. Spectral Theory (in Press), 	arXiv:1904.10263 [math.SP].

\bibitem{Bond18}
Bondarenko, N. P. A partial inverse problem for the Sturm-Liouville operator on a star-shaped graph, Anal. Math. Phys. 8: 1 (2018), 155--168.

\bibitem{Yang14}
Yang, C.-F. Traces of Sturm-Liouville operators with discontinuities, Inverse Problems in Science and
Engineering 22 (2014), 803--813.

\bibitem{BFY}
Buterin, S. A.; Freiling, G.; Yurko, V. A. Lectures in the theory of entire functions, Schriftenriehe der Fakult\"at f\"ur Matematik, Duisbug-Essen University, SM-UDE-779 (2014).

\bibitem{BB17}
Bondarenko N.; Buterin S. On a local solvability and stability of the inverse transmission
eigenvalue problem, Inverse Problems 33 (2017), 115010.

\end{thebibliography}
\end{document}